\def\C{\mathbb{C}}
\def\c2{\mathbb{C}^2}
\def\R{\mathbb{R}}
\def\N{\mathbb{N}}
\def\P{\mathbb{P}}
\def\1{\bold{1}}
\def\a{\alpha}
\def\e{\varepsilon}
\def\P2{\mathbf{P}^2}
\def\C2{\mathbf{C}^2}
\def\C3{\mathbf{C}^3}
\def\moins{^{-1}}
\def\dis{\displaystyle}
\def\C{\mathbb{C}}
\def\R{\mathbb{R}}
\def\P{\mathbb{P}}
\def\N{\mathbb{N}}
\def\P2{\mathbb{P}^2}
\def\1{\mathbf{1}}
\def\a{\alpha}
\def\e{\varepsilon}
\begin{document}

\title{Dynamics of non cohomologically hyperbolic automorphisms of $\mathbb{C}^3$
}


\author{Fr\'ed\'eric Protin 
}


\institute{F. Protin \at
              INSA de Toulouse\\
              \email{fredprotin@yahoo.fr}           
}

\date{Received: date / Accepted: date}

\maketitle

\begin{abstract}
We study the dynamics of a family of non cohomologically hyperbolic automorphisms $f$ of $\mathbb{C}^3$. We construct a compactification $X$ of $\mathbb{C}^3$ where their extensions are algebraically stable. We finally construct canonical invariant closed positive $(1,1)$-currents for $f^*$, $f_*$ and we study several of their properties. Moreover, we study the well defined current $T_f \wedge T_{f\moins}$ and the dynamics of $f$ on its support. Then we construct an invariant positive measure $T_f \wedge T_{f\moins}\wedge \phi_{\infty}$, where $\phi_{\infty}$ is a function defined on the support of $T_f \wedge T_{f\moins}$. We prove that the support of this measure is compact and pluripolar. We prove also that this measure is canonical, in some sense that will be precised.
\keywords{complex dynamics \and invariant current \and invariant measure}
\end{abstract}

\section{Introduction}
\label{intro}

Our purpose is to study the dynamics of a family of non cohomologically hyperbolic automorphisms (in the sense of \cite{Gs}) of $ \mathbf{C}^3$. The dynamics of such polynomial automorphisms is less understood than that of cohomologically hyperbolic automorphisms. More precisely, we study the following family:
\begin{equation}\label{maegawa}
 \dis f:(x,y,z)\in \mathbf{C}^3\longmapsto\left(y, z, yz+by+cz+dx+e\right)\in \mathbf{C}^3,
\end{equation}
where $b,c,d,e$ are complex parameters, with $d\neq 0$. These are the first examples of non cohomologically hyperbolic automorphisms with a rich dynamics. This family has already been studied by Maegawa \cite{M}, but our approach is quite different. We will construct a dynamical system $(X,f)$, where $X$ is a "nice compactification" of $\mathbb{C}^3$.
\smallskip

In Section 1 we construct the space $X$, a "nice compactification" of $\mathbb{C}^3$ useful for the study of $f$. We then obtain in Section 2 a canonical invariant class in $H^{1,1}(X,\mathbb{R})$. This is the same for $f$ and $f\moins$. We prove that this class is K\"ahler. This allows us to construct, in Section 3, positive invariant currents $T_f$ and $T_{f\moins}$ for $f$ and $f\moins$. We show that their potentials are continuous in $\mathbb{C}^3$ and pluriharmonic on the basin of attraction of infinity. In Section 4 we study the current $T_f\wedge T_{f\moins}$ and give a filtration of the dynamics of $f$ on $\dis\text{supp}(T_f\wedge T_{f\moins})$. 
 
\section{Good dynamical compactification for $f$ and $f^{-1}$}
\label{sec:1}

Up to conjugating $f$ by a suitable affine automorphism, we can assume without loss of generality that $ e = 0 $. We then have
 \begin{equation}\label{eq.maegawamoins} 
 f^{-1}(x,y,z)=\left(\frac{z-xy-bx-cy}{d}, x, y\right).
\end{equation}

If $\tau(x,y,z)=(z,y,x)$, then $\tau^{-1}=\tau$ and $\tau f \tau=(xy+by+cx+dz,x,y)$, which has the same form as $f^{-1}$. Therefore, the general behaviour of $f$ near infinity can be obtained from that of $f^{-1}$ by symmetry. Note that the jacobian of $f$ is constant and equal to $|d|$.
\smallskip

\paragraph{{\bf Extension of $f$ to $\mathbb{P}^3$.}} 

Let us denote by $[x,y,z,t]$ the homogeneous coordinates for $\mathbb{P}^3$, and by $H_{\infty}:=(t=0)$ the hyperplane at infinity. Written in homogeneous coordinates, the extension of $f$ to $\mathbb{P}^3=\mathbb{C}^3\sqcup H_{\infty}$, 
still denoted by $f$, is:
 \begin{equation}\label{homogene}
  f=[yt: zt: yz+byt+czt+dxt: t^2].
  \end{equation} 
  
  We denote by $I_f$ the set of indeterminacy of $f$, i.e. the points on $H_{\infty}$ where $f$ does not extend continuously.
  
  \smallskip
  
 Recall that a rational transformation $g:X \rightarrow X$ of a projective manifold $X$
  is {\it algebraically stable} (AS) if there is no hypersurface mapped by some iterate $g^n$ in the set $ I^+:=\cup_{n}I_{g^n}$ (\cite{FS}). 
  This is equivalent to the fact that $g$ induces a linear action $g^*$ on the cohomology vector space $H^{1,1}(X,\R)$
  which is compatible with the dynamics, in the sense that
  $(g^n)^*=(g^*)^n$ for all $n \in \N$. This condition is crucial in order to construct interesting invariant currents.
  
Note that $f$ is not AS in $\mathbb{P}^3$ since it contracts the hyperplane at infinity $H_{\infty}=(t=0)$ onto the point of indeterminacy $[0,0,1,0 ]\in I_f$. Setting 
 $$
 L:=(Y=T=0),\text{ }p^+:=[1,0,0,0],\text{ }p^-:=[0,0,1,0],
 $$ 
 $$
L':=(Z=T=0), L'':=(X=T=0), 
$$ 
we have the following more accurate description of the behavior of $f$ at infinity in $\mathbb{P}^3$.

\begin{lemma}\label{indeterminate}
The indeterminacy loci of $f$, respectively $f^{-1}$ in $\mathbb{P}^3$ are 
$$
I_f= L\cup L', \; I_{f^{-1}}=L\cup L'',
\; I_{f^2}=I_f \text{ and } I_{f^{-2}}=I_{f^{-1}}.
$$
Let $x\in L\setminus \{  p^+\}  $ and $x'\in L\setminus \{  p^-\} $. Then $$f(x)=L''\text{ and }f^{-1}(x')=L'.$$
Let $y\in L'\setminus \{  p^+\} $ and $y'\in L''\setminus \{  p^-\} $. Then $$f(y)=L\text{ and }f^{-1}(y')=L.$$
We have $f(p^+)=H_{\infty}$ and $f^{-1}(p^-)=H_{\infty}$.
\end{lemma}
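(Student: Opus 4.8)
The plan is to read off every indeterminacy locus directly from the homogeneous expression \eqref{homogene} for $f$ and the corresponding one for $f\moins$, and to obtain the image statements by resolving the indeterminacy along explicit families of approach curves, computing the set of limit values.

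First I would compute $I_f$. A point of $\mathbb{P}^3$ is indeterminate precisely where all four components of \eqref{homogene} vanish at once. The last component $t^2$ forces $t=0$, and on $H_{\infty}=(t=0)$ the third component reduces to $yz$; hence $I_f=(t=0)\cap(yz=0)=L\cup L'$. Homogenizing \eqref{eq.maegawamoins} gives $f\moins=[\,zt-xy-bxt-cyt:dxt:dyt:dt^2\,]$, which has no common factor because $d\neq0$; the same analysis (now $t=0$ forces the third component $dt^2$ to vanish, and the first reduces to $-xy$) yields $I_{f\moins}=(t=0)\cap(xy=0)=L\cup L''$.

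The equalities $I_{f^2}=I_f$ and $I_{f^{-2}}=I_{f\moins}$ are the one genuinely non-formal point, since $f$ already collapses $H_{\infty}\setminus I_f$ onto $p^-\in I_f$, so a naive count would wrongly predict new indeterminacy along all of $H_{\infty}$. Instead I would substitute \eqref{homogene} into itself and simplify: every component of the resulting quartic map is divisible by $t$, and after removing this common factor the last component becomes $t^3$ while the third component restricted to $(t=0)$ equals $yz^2$. Thus $I_{f^2}=(t=0)\cap(yz^2=0)=L\cup L'=I_f$; the cancellation of the factor $t$ is exactly what stops the indeterminacy from spreading, reflecting the fact that the contraction of $H_{\infty}$ to $p^-$ and the subsequent blow-up of $p^-$ undo one another. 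The statement for $f\moins$ follows by the identical computation, or from the symmetry $\tau f\tau$ recorded above together with $\tau(L)=L$, $\tau(L')=L''$, $\tau(L'')=L'$.

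For the image statements I would resolve the indeterminacy by approaching a point of the relevant line along a two-parameter family and reading off the limits. Fixing $x=[a:0:1:0]\in L\setminus\{p^+\}$ and approaching it through $y=\alpha s,\ t=\beta s$ with the $x$-coordinate held at $a$ gives, to leading order in $s$, the limit $[\,0:\beta:\alpha+(c+da)\beta:0\,]$; as $(\alpha,\beta)$ vary this sweeps out all of $L''$, so $f(x)=L''$, and symmetrically $f(L'\setminus\{p^+\})=L$. The delicate case is $f(p^+)=H_{\infty}$: in the chart $x=1$ the map reads $[\,yt:zt:yz+byt+czt+dt:t^2\,]$, and every straight-line approach to $p^+$ collapses to $p^-$ because the term $dt$ dominates the third component. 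I would therefore use the two-scale curve $y=\alpha s,\ z=\beta s,\ t=-\alpha\beta s^2/d+\tau s^3$, calibrated so that the leading $s^2$-part $yz+dt$ of the third component cancels; the limit then equals $[\,\alpha:\beta:(b\alpha+c\beta)-d^2\tau/(\alpha\beta):0\,]$, and letting $\tau$ run over $\mathbb{C}$ for each ratio $[\alpha:\beta]$ produces every point of $H_{\infty}$. The corresponding $f\moins$ statements (with $p^-$ in place of $p^+$ and $L',L''$ interchanged) follow either by the same computation on the homogenized \eqref{eq.maegawamoins} or from the involution $\tau$.

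The main obstacle is precisely $f(p^+)=H_{\infty}$: since a single point is being blown up onto a two-dimensional hypersurface, no one-parameter or generic approach suffices, and one must tune the approach curve to first annihilate the dominant term $yz+dt$ and only then exploit the remaining order-$s^3$ freedom to sweep out the full hyperplane. The divisibility-by-$t$ phenomenon underlying $I_{f^2}=I_f$ is a milder instance of the same subtlety, and I expect it must be verified by an explicit (if tedious) substitution rather than by formal manipulation of indeterminacy sets.
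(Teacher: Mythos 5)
Your proposal is correct, and on one point it is actually more complete than the paper's own proof. Both arguments treat the image statements in the same spirit: the paper takes sequences $z_n\to x\in L\setminus\{p^+\}$ and reads accumulation on $(X=T=0)$ off (\ref{homogene}), and for the delicate claim $f(p^+)=H_{\infty}$ it forces the approach to lie on the cubic hypersurfaces $S_e$ of (\ref{S}), on which the map rewrites as $\left[1:\frac{Z}{Y}:b+c\frac{Z}{Y}+e:\frac{T}{Y}\right]$, so that $e$ and $Z/Y$ become two free parameters. Your two-scale curves $y=\alpha s$, $z=\beta s$, $t=-\alpha\beta s^2/d+\tau s^3$ are the same mechanism in a different parametrization: along them the combination $z/t+dx/y$ tends to $-d^2\tau/(\alpha^2\beta)$, so your curves asymptotically lie on the paper's $S_e$ and your $\tau$ plays the role of the paper's $e$. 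The genuine difference is your treatment of $I_{f^2}=I_f$: you prove it by substituting (\ref{homogene}) into itself and cancelling the common factor $t$, checking that the reduced third component restricts to $yz^2$ on $(t=0)$. The paper's proof never addresses this equality, and it is not a formal consequence of the image statements: since $f$ contracts $H_{\infty}\setminus I_f$ onto $p^-\in I_f$, one really must verify that $f^2$ nevertheless extends across $H_{\infty}\setminus I_f$, which is exactly what your cancellation shows. Two small repairs would make your argument airtight: first, your family for $f(p^+)$ sweeps only $\{[\alpha:\beta:w:0]:\alpha\beta\neq 0,\ w\in\mathbb{C}\}$, which omits the two lines $L\cup L''$, so you should invoke the closedness of the cluster set of $f$ at $p^+$ to pass to all of $H_{\infty}$ (the paper's sweep $\{[1:a':b':0]\}$ has the same gap, missing the line $(X=0)\cap H_{\infty}$); second, since the claims are equalities of sets, you should record the easy converse inclusions, e.g. for $p^+$ that no limit can have nonzero last homogeneous coordinate: if it did, then $y/t$ and $z/t$ would stay bounded, and the term $d/t$ would force the ratio of the third to the fourth coordinate to blow up, a contradiction.
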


\begin{proof}
By symmetry it is sufficient to prove the result for $f$. Let $x\in L\setminus p^+$ and a sequence $(z_n)\in \mathbb{P}^3\setminus I_f$ such that $z_n\rightarrow x$. 
Then $\left(f(z_n)\right)$ accumulates on $(X=T=0)$ by (\ref{homogene}). In a similar way we compute $f(y)=L$ when $y\in (Z=T=0)\setminus p^+$.

Now let $(z_n)\rightarrow p^+$; this means that $ \frac{Y}{X}\rightarrow 0$, $ \frac{Z}{X}\rightarrow 0$, $ \frac{T}{X}\rightarrow 0$. Moreover suppose that $z_n$ belongs to the cubic hypersurface $S_e$ defined by\begin{equation}\label{S}\frac{z}{t}+d\frac{x}{y}=e\end{equation}\noindent for $e\in \mathbf{C}$ (note that $p^+\in S_e$). In this case, we can replace the homogeneous representation (\ref{homogene}) by $ f=\left[1, \frac{Z}{Y}, b+c\frac{Z}{Y}+e, \frac{T}{Y}\right]$. For all $a', b'\in\mathbf{C}$ there exists $e\in\mathbf{C}$ and $(z_n)\rightarrow p^+$ such that $(z_n)\in S$ and $\frac{Z}{Y}\rightarrow a'$, $b+c\frac{Z}{Y}+e\rightarrow b'$. 

Let us now compute the last coordinate of $f$ in this case. We have $\frac{T}{Y}=\frac{T}{Z}\frac{Z}{Y}$. Since $\frac{Z}{Y}\rightarrow a'$ and $\frac{T}{Z}\rightarrow 0$ according to (\ref{S}) (recall that $ \frac{Y}{X}\rightarrow 0$), we have $\frac{T}{Y}\rightarrow 0$. Finally, for every $q\in H_{\infty}$ there exists a sequence converging to $p^+$ that is mapped into a sequence converging to $q$, hence $f(p^+)=H_{\infty}$.\qed
\end{proof}

We note that the hyperplane $H_{\infty}\setminus I_f^+$ is contracted into the point $p^-\in I_f$ by $f$ and into the point $p^+=[1: 0: 0: 0]\in I_{f^{-1}}$ by $f^{-1}$: thus neither $f$ nor $f^{-1}$ is algebraically stable on $ \mathbf{P}^3 $. Figure 1 summarizes the image by $f$ of the points of indeterminacy; $I_f$ is represented by straight lines and the action by $f$ by curved arrows. The figure for $f^{-1}$ is obtained by symmetry.

\begin{figure}\label{fig1}
\begin{center}
\includegraphics[width=13cm, height=8cm]{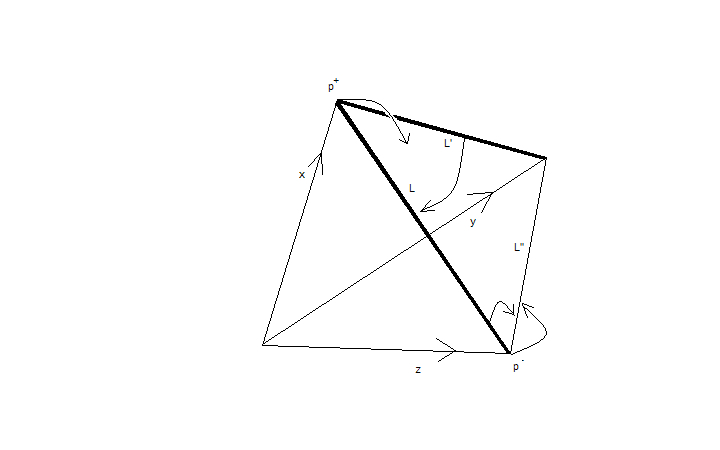}
\end{center}
\caption{Divisor at infinity of $\mathbb{P}^3(\mathbb{C})$. The indeterminacy set is represented by straight lines and the action by $f$ by curved arrows. }
\end{figure}

The main result of this section is the following.

\begin{proposition}
Let $X$ denote the blow-up of $\mathbb{P}^3$ having as smooth center the line $L=(Y=T=0)$, and let $f:X\rightarrow X$ denote the lift of $f$. Then $f:X\rightarrow X$ is algebraically stable. 
\end{proposition}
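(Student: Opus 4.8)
The plan is to verify the definition of algebraic stability directly: writing $\tilde f$ for the lift of $f$ to $X$ and $\pi\colon X\rightarrow\mathbb{P}^3$ for the blow-down, I must show that no hypersurface of $X$ is carried by some iterate $\tilde f^n$ into $I^+:=\bigcup_n I_{\tilde f^n}$. The observation that makes this tractable is that $f$ restricts to a biregular automorphism of $\mathbb{C}^3$ (its Jacobian is the constant $d\neq 0$), so $\tilde f$ maps $\mathbb{C}^3$ isomorphically onto itself. Hence the image under any iterate of a hypersurface $V$ meeting $\mathbb{C}^3$ is again $2$-dimensional, so it cannot be contained in the codimension-two set $I^+$. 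The only hypersurfaces that can ever be contracted, and thus the only candidates to destroy algebraic stability, are the two boundary divisors making up $X\setminus\mathbb{C}^3=\widetilde{H}_{\infty}\cup E$, where $\widetilde{H}_{\infty}$ is the strict transform of $H_{\infty}$ and $E=\pi^{-1}(L)$ is the exceptional divisor. It therefore suffices to locate $I_{\tilde f}$ and to follow the forward orbits of $\widetilde{H}_{\infty}$ and $E$.

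First I would compute $\tilde f$ in the two standard affine charts of the blow-up of $L=(Y=T=0)$, namely $[x:y:z:t]=[1:y:z:ys]$ with $E=(y=0)$, together with the analogous chart centred at $p^-\in L$. Substituting into (\ref{homogene}) shows that the blow-up resolves the indeterminacy of $f$ along $L$: the lift $\tilde f$ becomes holomorphic at every point of $E$ except the single point $\widetilde{L}'\cap E$ lying over $p^+$. One finds that the only remaining indeterminacy is the strict transform $\widetilde{L}'$ of $L'$, so $I_{\tilde f}=\widetilde{L}'$; since $L\cap L'=\{p^+\}$, this curve meets $E$ only over $p^+$. In the same charts one computes the images of the two contracted divisors. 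By Lemma \ref{indeterminate} the blown-up line satisfies $f(L\setminus\{p^+\})=L''$, and one checks that $E$ is contracted onto the curve $\widetilde{L}''$, while $\widetilde{H}_{\infty}$ is contracted to a single point: tracking the exceptional direction $[Y:T]$ (not merely the image in $\mathbb{P}^3$, which is always $p^-$) shows that $\tilde f(\widetilde{H}_{\infty})$ is the point $q\in E$ lying over $p^-$ in the direction $[Y:T]=[1:0]$.

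The decisive point is the position of $q$. Because $q$ lies over $p^-$ whereas $I_{\tilde f}=\widetilde{L}'$ meets $E$ only over $p^+$, and $p^+\neq p^-$, we get $q\notin I_{\tilde f}$. A short local computation in the chart centred at $p^-$ shows moreover that $q$ is a fixed point of $\tilde f$ at which $\tilde f$ is holomorphic; iterating, $q\notin I_{\tilde f^n}$ for every $n$, so $q\notin I^+$. Hence the forward orbit of $\widetilde{H}_{\infty}$ is $\widetilde{H}_{\infty}\mapsto q\mapsto q\mapsto\cdots$ and never meets $I^+$. For $E$ I use that $\widetilde{L}''\subset\widetilde{H}_{\infty}$ (since $L''\subset H_{\infty}$), whence $\tilde f(\widetilde{L}'')=q$; thus the orbit of $E$ is $E\mapsto\widetilde{L}''\mapsto q\mapsto q\mapsto\cdots$. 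As $\widetilde{L}''\neq\widetilde{L}'$ and $\tilde f(\widetilde{L}'')=q\notin I^+$, the curve $\widetilde{L}''$ is contained in no $I_{\tilde f^n}$ either. Thus neither boundary divisor has an iterate mapped into $I^+$, and $\tilde f$ is algebraically stable.

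The conceptual reason the blow-up works, and the step I expect to require the most care, is the separation of the two features that collided on $\mathbb{P}^3$: there $H_{\infty}$ was contracted onto $p^-$, which was an indeterminacy point precisely because $p^-\in L\subset I_f$. On $X$ the indeterminacy along $L$ is dissolved, while the residual indeterminacy $L'$ meets the blown-up centre $L$ at the \emph{other} point $p^+$; blowing up $L$ therefore pulls the contracted image $q$ of $H_{\infty}$ off the indeterminacy locus. The main obstacle is accordingly the bookkeeping of exceptional directions through the blow-up charts — in particular, verifying that $\widetilde{H}_{\infty}$ lands at the specific point $q$ over $p^-$, that $q$ genuinely lies off $\widetilde{L}'$, and that $q$ is a fixed point at which $\tilde f$ is holomorphic — since the naive image in $\mathbb{P}^3$ discards exactly the information that the blow-up was designed to record.
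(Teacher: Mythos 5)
Your proof follows the same route as the paper's: hypersurfaces meeting $\mathbb{C}^3$ cannot be sent into $I^+$ because $f$ is an automorphism there, so everything reduces to tracking the orbits of the two boundary divisors in blow-up charts, and you obtain exactly the paper's picture $\widetilde{H}_{\infty}\rightarrow q$ and $E\rightarrow\widetilde{L}''\rightarrow q$, where $q$ (the point the paper again calls $p^-$, namely the point of $E\cap\widetilde{H}_{\infty}$ over $[0:0:1:0]$ with direction $[Y:T]=[1:0]$) is a fixed point at which the lift is holomorphic. Those are the load-bearing facts, and they are correct.

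There is, however, one genuinely false claim in your write-up: the blow-up does \emph{not} resolve the indeterminacy down to the single point $\widetilde{L}'\cap E$, and $I_{\tilde f}$ is strictly larger than $\widetilde{L}'$. The whole fiber of $E$ over $p^+$ consists of indeterminacy points of the lift. This is visible in the chart of equation (\ref{eq.prout}) (input chart $X\neq 0$, $\xi_2\neq 0$, where $\pi(\omega_1,\omega_2,\omega_3)=(\omega_1\omega_3,\omega_2,\omega_3)$ and the fiber over $p^+$ is $\{\omega_2=\omega_3=0\}$): the middle coordinate of the lift is $\omega_2/\omega_3$, which has no limit along that curve. Geometrically, every point of this fiber is sent by the map downstairs to $[0:0:1:0]$, which lies \emph{on} the blown-up line $L$, so the lift must additionally record a direction $[Y:T]=[\omega_2:\omega_3]$, and that direction depends on the path of approach. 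This curve is exactly the paper's $\mathcal{C'}^+$ in Proposition \ref{i}, where $I^+$ is shown to consist of three curves, two of them ($\mathcal{C'}^+$ and $\mathcal{C}^+$) inside $E$. Your error turns out not to be fatal, because what your argument actually uses is weaker and true: $I_{\tilde f}\cap E$ lies entirely over $p^+$ (being the fiber over $p^+$), so $q$, which lies over $[0:0:1:0]\neq p^+$, avoids it; and in any case your direct verification that $\tilde f$ is holomorphic at the fixed point $q$ already yields $q\notin I_{\tilde f^n}$ for all $n$ with no global knowledge of $I_{\tilde f}$. Likewise $\widetilde{L}''$ is contained in no $I_{\tilde f^n}$: it is distinct from $\widetilde{L}'$, it is not contained in $E$ (hence differs from $\mathcal{C'}^+$ and $\mathcal{C}^+$), and its generic point has a well-defined forward orbit landing on $q$. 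So the proposal is sound once the assertion $I_{\tilde f}=\widetilde{L}'$ is replaced by the correct statement that $I_{\tilde f}$ is the union of $\widetilde{L}'$ and the fiber of $E$ over $p^+$.
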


More precisely, we are going to show that the exceptional divisor $E$ of the blow-up $\pi:X\rightarrow\mathbb{P}^3$ along $L$ is mapped by $f$ onto regular points of $\widetilde{H}_{\infty}$, the strict transform of the hyperplane at infinity $H_{\infty}$, and that $\widetilde{H}_{\infty}$ is contracted by $f$ onto a regular superattractive fixed point. Let us still denote by $p^+$ and $p^-$ the points where $\widetilde{H}_{\infty}$ is mapped by $f^{-1}$ and $f$ respectively, and by $I_f$, $I_{f^{-1}}$ the indeterminacy locus for $f$, respectively $f^{-1}$. We denote also by $L'$, $L''$ the strict transform of $L',L''\subset \mathbb{P}^3$.

\begin{proof}
\noindent{\bf Image of $\widetilde{H}_{\infty}$ by $f$.} We first express $f$ in the chart $Z\neq 0$ at the input and output. 
The origin of this chart is ${p}^{-}$. We have
 \begin{align*}
 f(x,y,t)& :=f[x:y:1:t]=[yt:t:y+byt+ct+dxt:t^2]\\
 & \ =\left(\frac{yt}{y+byt+ct+dxt}, \frac{t}{y+byt+ct+dxt}, \frac{t^2}{y+byt+ct+dxt}\right).
 \end{align*}
 Recall that the blow-up variety $X$ is described by:
\begin{equation}\label{eclatement}
 \left\{ \left(x, y, t, [\xi_1, \xi_2] \right)\in \mathbf{C}^3\times \mathbf{P}^1, \xi_1 t=\xi_2 y \right\}.
\end{equation} 
The expression of $f$ in the chart $\xi_1\neq 0$ at input and output is then:
\begin{equation}\label{eq}  \begin{array}{ccc}
                                             &   f         &         \\
(\omega_1, \omega_2, \omega_3)                              &    \longmapsto    &  (\frac{\omega_2\omega_3}{A}, \frac{\omega_3}{A}, \omega_2\omega_3) \\
                                                            &                                            &  \\
                                                           \downarrow\pi &                                            &  \downarrow\pi\\
                                                            &                                            &  \\
 (\omega_1, \omega_2, \omega_2\omega_3) &    \longmapsto   & (\frac{\omega_2\omega_3}{A}, \frac{\omega_3}{A}, \frac{\omega_2\omega_3^2}{A})\\
                                                 
\end{array}\end{equation}

\noindent where $A=1+b\omega_2\omega_3+c\omega_3+d\omega_1\omega_3$. Note that the point $p^-$ at the origin of this chart is fixed, superattractive; we verify that $f(\widetilde{H}_{\infty})=p^-$. It remains to prove that $E$ is never mapped by $f^n$ onto a point of indeterminacy.
\smallskip

\noindent{\bf Image of $E$ by $f$. }The exceptional divisor $(\omega_2=0)$ is mapped by $f$ (except for the points of indeterminacy) on the strict transform of the line at infinity $L'':=(x=t=0)$.

Note for later use that there is no point of indeterminacy in $E\cap \widetilde{H}_{\infty}$ in the chart $(Z\neq 0, \xi_1\neq 0)$.

The expression of $f$ in the charts $\xi_2\neq 0$ at input and $\xi_1\neq 0$ at output is
\begin{equation}\label{eq1}  
\begin{array}{ccc}
                                             &   f         &         \\
(\omega_1, \omega_2, \omega_3)                              &    \longmapsto    &  (\frac{\omega_2\omega_3}{B},\frac{1}{B} , \omega_3) \\
                                                            &                                            &  \\
                                                           \downarrow\pi &                                            &  \downarrow\pi\\
                                                            &                                            &  \\
 (\omega_1, \omega_2\omega_3, \omega_3) &    \longmapsto   & (\frac{\omega_2\omega_3}{B}, \frac{1}{B}, \frac{\omega_3}{B})\\                                  
\end{array}
\end{equation}
where $B=\omega_2+b\omega_2\omega_3+c+d\omega_1$. Here we see again that the exceptional divisor $\omega_3=0$ is mapped by $f$ on the strict transform of the line at infinity $L''=(t=x=0)$. Therefore $E$ is contracted by $f^2$ onto $p^-$. \qed
\end{proof}
Let us summarize the action of $f$ at infinity:
$$\begin{array}{lcll}
\widetilde{H}_{\infty}\setminus I_{f} &\longrightarrow  &  p^-                       &     \\
E\setminus I_f                              &\longrightarrow    &  L''      \longrightarrow   &  p^-. \\

\end{array}$$
The sequence of indeterminacy sets $I_{f^n}\subset X$ of $f^{-n}$ (resp. $I_{f^{-n}}\subset X$ of $f^{-n}$) is stationary. More precisely, we have the following

\begin{proposition}\label{i} 
For all $n\geq 2$, $I_{f^n}= I_{f^2}:=I^+$ and $I_{f^{-n}}=I_{f^{-2}}:=I^-$. 
Each of the sets $I^+$ and $I^-$ consists of three curves.
Moreover, the set $I^+\cap I^-$ is finite with $$ \sharp(I^+\cap I^-)=\Bigg\{\begin{array}{cl}
                                      
 5                    & \text{ if }\left( b-c\right)^2-4d\neq 0, \\
 4                    & \text{ otherwise. }\\  
\end{array}$$
\end{proposition}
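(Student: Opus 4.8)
The plan is to reduce everything to $I_{f}$ and $I_{f^{-1}}$ by means of the recursion
$I_{f^{n}}=I_{f}\cup f^{-1}(I_{f^{n-1}})$ (and its mirror image for $f^{-1}$), and to exploit the involution $\tau$, which by the symmetry noted after \eqref{eq.maegamamoins}... (i.e. after \eqref{eq.maegawamoins}) interchanges the entire picture of $f$ and $f^{-1}$, so that $I^{-}=\tau(I^{+})$ throughout. First I would pin down $I_{f}$ and $I_{f^{-1}}$ on $X$. Reading the maps \eqref{eq} and \eqref{eq1}, together with the two remaining charts of \eqref{eclatement} (the charts $x=1$ and $y=1$, which see the direction $[\xi_{1}:\xi_{2}]=[1:0]$), one checks that $f$ is holomorphic off the locus $\{z=t=0\}$ with direction $[1:0]$; hence $I_{f}$ is exactly the strict transform of $L'$, and symmetrically $I_{f^{-1}}$ is the strict transform of $L''$. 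The point $p^{+}=L'\cap E$ is the only special point on $I_{f}$, and likewise $p^{-}=L''\cap E$ for $I_{f^{-1}}$.

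Next I would build $I^{+}=I_{f^{2}}=L'\cup f^{-1}(L')$. Since $f$ is an automorphism of $\C^{3}$, every curve contracted by $f$ lies at infinity, namely among the fibres of $f|_{E}\colon E\to L''$ and inside $\widetilde H_{\infty}$ (which $f$ collapses to $p^{-}$). A point of $L'$ can be the image of such a contracted curve only if it lies on $L''$ or equals $p^{-}$; as $p^{-}\notin L'$, the unique such point is $Q_{0}:=L'\cap L''=[0:1:0:0]$, whose $E$-fibre is the curve $F_{0}=\{A_{0}=0\}\cap E$, with $A_{0}=1+c\omega_{3}+d\omega_{1}\omega_{3}$ as in \eqref{eq}. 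A chart-by-chart inspection then shows that $I^{+}$ has exactly three irreducible components: $L'$, the contracted fibre $F_{0}$, and a third curve lying in the exceptional locus; applying $\tau$ gives the three components $L''$, $F_{0}'=\tau(F_{0})$ and the $\tau$-image of the third curve for $I^{-}$.

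For the stabilisation I would observe that each component of $I_{f^{2}}$ other than $L'$ lies in $E$. Because $f^{-1}$ contracts $E$ onto $L'$ (that is $f^{-1}(E)=L'$), the preimage $f^{-1}(F_{0})$, and likewise the preimage of the third component, is contained in $L'\subset I_{f}$. Consequently $f^{-1}(I_{f^{2}})\subset I_{f^{2}}$ produces no new curve, so $I_{f^{3}}=I_{f^{2}}$, and a one-line induction gives $I_{f^{n}}=I_{f^{2}}$ for every $n\ge 2$; the involution $\tau$ yields $I_{f^{-n}}=I_{f^{-2}}$ as well. This is exactly the mechanism by which the superattractive behaviour at $p^{-}$ forces the indeterminacy data to be stationary.

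The final step is the count, carried out inside the two surfaces $E\cong\P^{1}\times\P^{1}$ and $\widetilde H_{\infty}\cong\P^{2}$. The ``linear'' components $L'$, $L''$ and the two exceptional curves meet only at the three distinguished points $p^{+}$, $p^{-}$ and $Q_{0}$. The decisive contribution is $F_{0}\cap F_{0}'$ inside $E$: in affine coordinates $(\zeta,w)$ on $E$ with base $\zeta=z/x$ and fibre $w=t/y$, one finds $F_{0}\colon\ \zeta(1+cw)+dw=0$ and $F_{0}'\colon\ \zeta w=1+bw$, so that eliminating $\zeta$ yields
\[
(bc+d)\,w^{2}+(b+c)\,w+1=0 ,
\]
whose discriminant is precisely $(b-c)^{2}-4d$. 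Hence $F_{0}\cap F_{0}'$ consists of two points when $(b-c)^{2}-4d\neq 0$ and of a single double point otherwise, giving $\sharp(I^{+}\cap I^{-})=3+2=5$ or $3+1=4$. The main obstacle is the bookkeeping of the six components and of their intersections across the charts of \eqref{eclatement}: one must verify that $p^{+},p^{-},Q_{0}$ are the only meeting points of the linear components and that no further intersection hides at $p^{\pm}$ or along $\widetilde H_{\infty}$. Once that verification is in place, the case distinction reduces exactly to the vanishing of the discriminant $(b-c)^{2}-4d$.
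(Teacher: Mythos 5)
Your overall skeleton (identify the irreducible components of $I^{\pm}$, stabilise by a preimage argument, count pairwise intersections, get the discriminant) parallels the paper's, and your computation of $F_0\cap F_0'$ is correct: it is the paper's computation of $\mathcal{C}^+\cap\mathcal{C}^-$, with the same discriminant $(b-c)^2-4d$. But your starting point is false, and it breaks your derivation of the three components. You claim $I_f$ is exactly the strict transform $\widetilde{L'}$ of $L'$; in fact $I_f$ also contains the whole fibre of $E$ over $[1:0:0:0]$, the curve the paper calls $\mathcal{C'}^+$. The failure is visible precisely in the chart $(X\neq 0,\ \xi_2\neq 0)$ that you claim to have inspected: by (\ref{eq.prout}), points of that fibre are sent by $f$ to $[0:0:1:0]$, which lies \emph{on} the blown-up line $L$, so the lift to $X$ needs the direction $[y:t]$ of the image point; that direction is the middle coordinate $\omega_2/\omega_3$ of (\ref{eq.prout}), which has no limit along $\{\omega_2=\omega_3=0\}$ (approaching along $\omega_2=\lambda\omega_3$ gives the limit $\lambda$). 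Once you take $I_f=\widetilde{L'}$, your recursion $I_{f^2}=I_f\cup f^{-1}(I_f)$ can only ever yield \emph{two} curves, $\widetilde{L'}$ and $F_0$: the only curves contracted by $f$ are $\widetilde{H}_{\infty}$ (onto $p^-\notin\widetilde{L'}$) and the fibres of $f|_E$ (onto points of $\widetilde{L''}$, which meets $\widetilde{L'}$ only above $Q_0$, giving $F_0$). So your ``third curve lying in the exceptional locus'' is never produced by your own argument; it is $\mathcal{C'}^+$, which your first step wrongly excluded from $I_f$.

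The same error propagates into the count, which comes out right only by compensation. The five points of $I^+\cap I^-$ are not $p^+,p^-,Q_0,B,B'$: one checks that $p^+=\widetilde{L'}\cap E$ lies on no component of $I^-$, and symmetrically $p^-\notin I^+$. Indeed the only component of $I^-$ that meets the fibre $\mathcal{C'}^+$ is $\mathcal{C}^-$, and in the chart $(x\neq0,\ \xi_2\neq0)$, where $\mathcal{C}^-=\{\omega_2-\omega_1-b-c\omega_1\omega_3=0\}$ and $\mathcal{C'}^+=\{\omega_2=\omega_3=0\}$, their intersection is the single point with direction $y/t=-b$, not $p^+$, whose direction is $[y:t]=[1:0]$. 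The correct list is: the point $C$ above $Q_0$, the two points $B,B'$ of $\mathcal{C}^+\cap\mathcal{C}^-$, and the two points $A^+=\mathcal{C'}^+\cap\mathcal{C}^-$ and $A^-=\mathcal{C}^+\cap\mathcal{C'}^-$. Your total of $5$ (resp.\ $4$) is right only because two spurious points ($p^{\pm}$) offset two missing ones ($A^{\pm}$). This is not harmless bookkeeping: $p^{\pm}$ are superattracting fixed points, whereas $A^{\pm}$ are not fixed at all ($f(A^+)=A^-$ and $f(A^-)=C$ by Proposition \ref{prout}), and the paper's subsequent description of the dynamics of $f$ on $\mathrm{supp}(T_f\wedge T_{f^{-1}})$ at infinity rests on distinguishing these points.
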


In Figure \ref{fig2}, $I^+$ is represented in the divisor at infinity $E\cup \widetilde{H}_{\infty}$ of $X$. The dynamics of the irreducible components of $I^+$ by $f$ is indicated by curved arrows.

\begin{figure}\label{fig2}
\begin{center}
\includegraphics[width=13cm, height=8cm]{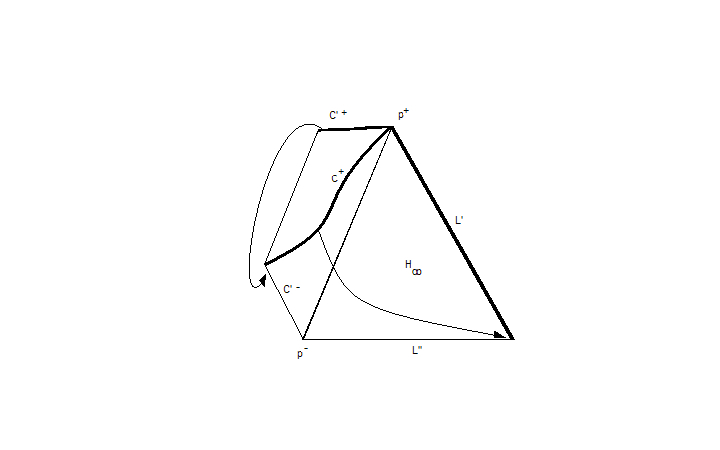}
\end{center}
\caption{Divisor at infinity of $X$. The indeterminacy set is represented by straight lines and the action by $f$ by curved arrows. }
\end{figure}

\begin{proof}

\noindent{\bf Points of indeterminacy in $E$ for $f$ and $f^{-1}$. } There are points of indeterminacy for $f$ in $E$. To see this, let us first consider the expression of $f$ in the charts $(z\neq 0)$ at input and $(y\neq 0)$ at output: we have  $  f(x,y,t)=\left(y, \frac{y+byt+ct+dxt}{t}, t\right)$. The expression of $f$ in the chart $(z\neq 0,\text{ }\xi_2\neq 0)$ at input and the chart $(y\neq 0)$ at output is:

\begin{equation}\label{cavalavie?}  \begin{array}{ccc}
                                             &   f         &         \\
(\omega_1, \omega_2, \omega_3)                              &    \longmapsto    &  (\omega_2\omega_3, \omega_2+b\omega_2\omega_3+c+d\omega_1, \omega_3) \\
                                                            &                                            &  \\
                                                           \downarrow\pi &                                            &  \downarrow\pi\\
                                                            &                                            &  \\
 (\omega_1, \omega_2\omega_3, \omega_3) &    \longmapsto   & (\omega_2\omega_3, \omega_2+b\omega_2\omega_3+c+d\omega_1, \omega_3)\\
                                     
\end{array}\end{equation}

\noindent Thus there is no point of indeterminacy in the chart $(Z\neq 0,\xi_2\neq 0)$. Moreover, we have seen that $E\cap \widetilde{H}_{\infty}$ does not contain any point of indeterminacy for $f$ in the chart $(Z\neq 0,\xi_1\neq 0)$. Now the local expression of $f$ in the charts $(X\neq 0)$ at input and $(Z\neq 0)$ at output is $$ f=\left(\frac{yt}{yz+byt+czt+dt}, \frac{zt}{yz+byt+czt+dt}, \frac{t^2}{yz+byt+czt+dt} \right).$$ The expression of the blow-up in the chart $\xi_2\neq 0$ at input and output is: 
\begin{equation}\label{eq.prout}  
\begin{array}{ccc}
                                             &   f         &         \\
(\omega_1, \omega_2, \omega_3)                              &    \longmapsto    &  (\frac{\omega_1\omega_3}{C}, \frac{\omega_2}{\omega_3}, \frac{\omega_3 }{C})\\
                                                            &                                            &  \\
                                                           \downarrow\pi &                                            &  \downarrow\pi\\
                                                            &                                            &  \\
 (\omega_1\omega_3, \omega_2, \omega_3) &    \longmapsto   & (\frac{\omega_1\omega_3}{C}, \frac{\omega_2}{C}, \frac{\omega_3}{C})\\                                               
\end{array}
\end{equation}
where $C:=\omega_1\omega_2+b\omega_1\omega_3+c\omega_2+d$. Therefore, there is an indeterminacy locus on $E$, that we call $\mathcal{C'}^+$, whose equation is $(\omega_2=0)$ in this chart. 
By symmetry, $\mathcal{C'}^-:=I_{f^{-1}}\cap E$ is given by $(\omega_2=0)$ in the chart $(z\neq 0$, $\xi_2\neq 0)$.
One sees by (\ref{eq.prout}) that $ f(I_f\cap E)\subset E$ and $ f(I_{f^{-1}}\cap E)\subset E$. Therefore 
$$
f(\mathcal{C'}^+)=\mathcal{C'}^-\text{ and }f^{-1}(\mathcal{C'}^-)=\mathcal{C'}^+.
$$
\smallskip

\noindent{\bf Points of indeterminacy in $E$ for $f^2$ and $f^{-2}$. }Points in $E\cap \left(I_{f^2}\setminus I_f\right)$ are mapped by $f$ on $[0,1,0,0]$. It follows from (\ref{cavalavie?}) that $\mathcal{C}^+:=E\cap \left(I_{f^2}\setminus I_f\right)$ can be expressed in the chart $(z\neq 0, \xi_2\neq 0)$ by
$
c+d\omega_1=0.
$
We compute in the same way the expression of $\mathcal{C}^-:=E\cap I_{f^{-2}}\setminus I_{f^{-1}}$ in the chart $(x\neq 0, \xi_2\neq 0)$, namely
$
\omega_2-\omega_1-b-c\omega_1\omega_3=0,
$ whereas in the chart $(x\neq 0, \xi_1\neq 0)$ it is given by the equation
$
\omega_2\omega_3-1-b\omega_3-c\omega_1\omega_3=0.
$
\smallskip

\noindent{\bf The set $I^+\cap I^-$.} In the chart $(z\neq 0, \xi_1\neq 0)$, the curve $\mathcal{C}^+$ in $E$ corresponds to the equation 
$1+c\omega_3+d\omega_1\omega_3=0$ and the curve $\mathcal{C}^-$ to $\omega_3-\omega_1-b\omega_1\omega_3=0$. 
A straightforward computation shows that $\mathcal{C}^+\cap \mathcal{C}^-$ has two points in $E$ which coincide when $\left( b-c\right)^2-4d=0$.
\smallskip

\noindent{\bf Sequences $I_{f^n}$ and $I_{f^{-n}}$ are stationary.} Since neither points of $E\setminus \{ \mathcal{C}^+, \mathcal{C}^-\} $ nor points of $\widetilde{H}_{\infty}\setminus L'$ are mapped by $f$ in $I_{f^2}$, we have $I_{f^2}=I_{f^3}=I_{f^n}$, $n \geq 2$.
Similarly $I_{f^{-n}}=I_{f^{-2}}$ for all $n \geq 2$. \qed
\end{proof}

\begin{remark}
One can verify that $f$ and $f^{-1}$ become algebraically stable after blowing-up the two points $p^+$ and $p^-$ where the hyperplane at infinity in $\mathbb{P}^3$ is contracted by $f$ and $f^{-1}$ respectively. Note however that this increases the number of charts to be considered, as well as the dimension of $H^{1,1}(X,\mathbf{R})$. This is one reason why we have used a slightly different compactification.
\end{remark}

\section{Canonical invariant classes}

The action of $f^*$ on the divisors at infinity is described by the following

\begin{lemma}
$f^*[\widetilde{H}_{\infty}]=[\widetilde{H}_{\infty}]+[E]$ and $f^*[E]=[\widetilde{H}_{\infty}]$.
\end{lemma}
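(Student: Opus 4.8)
The plan is to read the two pullbacks straight off the chart expression (\ref{eq}), treating $f^{*}$ as an action on the free abelian group generated by the prime divisors at infinity, rather than attempting any global intersection-theoretic bookkeeping.

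First I would record a structural reduction. Since $f$ restricts to a biregular automorphism of $\mathbb{C}^{3}=X\setminus(\widetilde{H}_{\infty}\cup E)$, it has no indeterminacy point and contracts no hypersurface inside $\mathbb{C}^{3}$, and it carries affine hypersurfaces to affine hypersurfaces. Hence $f^{-1}(\widetilde{H}_{\infty}\cup E)$ is again contained in $\widetilde{H}_{\infty}\cup E$, so both $f^{*}[\widetilde{H}_{\infty}]$ and $f^{*}[E]$ are non-negative integer combinations of the two irreducible classes $[\widetilde{H}_{\infty}]$ and $[E]$. The four coefficients are precisely the orders of vanishing of the pulled-back local equations of $\widetilde{H}_{\infty}$ and $E$ along $\widetilde{H}_{\infty}$ and along $E$, and these may be computed at the generic point of each divisor.

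Next I would carry out that computation in the single chart $(Z\neq0,\ \xi_{1}\neq0)$ of (\ref{eq}), in which $E=\{\omega_{2}=0\}$ and $\widetilde{H}_{\infty}=\{\omega_{3}=0\}$ both at the source and at the target. From (\ref{eq}) the two output coordinate functions cutting out these divisors pull back to $\omega_{2}\circ f=\omega_{3}/A$ and $\omega_{3}\circ f=\omega_{2}\omega_{3}$, where $A=1+b\omega_{2}\omega_{3}+c\omega_{3}+d\omega_{1}\omega_{3}$. Since $A$ equals $1$ on $\{\omega_{3}=0\}$ and is not identically zero on $\{\omega_{2}=0\}$, it is a local unit along both divisors; therefore $f^{*}[E]=\operatorname{div}(\omega_{3}/A)=[\widetilde{H}_{\infty}]$ and $f^{*}[\widetilde{H}_{\infty}]=\operatorname{div}(\omega_{2}\omega_{3})=[\widetilde{H}_{\infty}]+[E]$, which is the assertion.

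The only delicate point, and the one I expect to be the real obstacle, is justifying that this one chart genuinely suffices: that it contains the generic point of each divisor and that $f$ is an honest holomorphic map there, so that the divisor-of-a-function computation indeed returns $f^{*}$ with no hidden multiplicities. For $\widetilde{H}_{\infty}$ this holds because a generic point of $H_{\infty}$ lies off $L$ with $Z\neq0$, where the blow-up is an isomorphism and the fibre coordinate is forced to $[\xi_{1}:\xi_{2}]=[1:0]$; for $E$ it holds because a generic point of $L$ has $Z\neq0$ and the affine piece $\xi_{1}\neq0$ is dense in the $\mathbb{P}^{1}$-fibre. As $A$ does not vanish generically on either divisor, $f$ restricts to a holomorphic map near these generic points, the indeterminacy curves $\mathcal{C}^{+},\mathcal{C}^{-}$ being proper subvarieties the generic point avoids. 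As a consistency check, the resulting matrix $\left(\begin{smallmatrix}1&1\\1&0\end{smallmatrix}\right)$ has characteristic polynomial $\lambda^{2}-\lambda-1$, whose dominant root $\tfrac{1+\sqrt5}{2}$ is the expected first dynamical degree and is compatible with the announced $f$/$f^{-1}$ symmetry of the invariant class.
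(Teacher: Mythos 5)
Your proposal is correct and takes essentially the same approach as the paper: the paper likewise obtains the coefficients as orders of vanishing read off the local expressions of the lift, after constraining the support of the pullbacks by the facts that $f(\widetilde{H}_{\infty}\setminus I_f)=p^-\in E$ and that $E$ and $\widetilde{H}_{\infty}$ are both mapped into $\widetilde{H}_{\infty}$. Your only departures are organizational rather than conceptual: you rule out affine components via the automorphism property instead of quoting the image computations, and you extract all four multiplicities at generic points in the single chart (\ref{eq}), where the paper invokes both (\ref{eq}) and (\ref{eq1}).
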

\begin{proof}
We have $f(\widetilde{H}_{\infty}\setminus I_f)=p^-\in E$, and no other hypersurface at infinity is mapped in $E$. So 
$$
f^*[E]=a[\widetilde{H}_{\infty}], \text{ }a>0.
$$
Now equation (\ref{eq}) shows that $a=1$.

Since $\widetilde{H}_{\infty}$ and $E$ are both mapped into $\widetilde{H}_{\infty}$ by $f$ we have 
$$
f^*[\widetilde{H}_{\infty}]=b[\widetilde{H}_{\infty}]+c[E], \text{ }b,c>0.
$$
The coefficients $a=b$ and $c$ are the order of vanishing of $f$ along $\widetilde{H}_{\infty}$ and $[E]$ respectively. 
By equations (\ref{eq}) and (\ref{eq1}) we have $b=c=1$.\qed
\end{proof}

Thus the action of $f^*$ induced on $H^{1,1}(X,\mathbf{R})$ is represented by the matrix 
 $  \left(\begin{array}{cc}                               
1                     & 1 \\
1                     & 0 \\  
\end{array}\right)$ 
in the basis $(\{ \widetilde{H}_{\infty}\} ,\{  E\} )$. By symmetry, the action of $f^{-1*}$ induced on $H^{1,1}(X,\mathbf{R})$ is represented by the same matrix. The spectral radius of $f^*$ (resp. $f^{-1*})$) is equal to the golden mean $\sigma:=  \frac{1+\sqrt{5}}{2}$. The other eigenvalue of $f^*$ (resp. $f^{-1*}$) is $\frac{1-\sqrt{5}}{2}$. Recall that the {\it dynamical degrees} of $f$ are given by \begin{equation}\label{degredynamique} \lambda_i(f):=\lim_{n\rightarrow +\infty}\left( \int_{X} f^{n*}\left(\omega^i\right)\wedge \omega^{3-i}  \right)^{\frac{1}{n}}, i=1,2,3\end{equation} for a 
K\"ahler form $\omega$ on $X$ (see \cite{Gs}). A rational endomorphism is called {\it cohomologically hyperbolic} if one dynamical degree strictly dominates the others. We have thus shown the following (see also \cite{M}):

\begin{proposition}
The dynamical degrees of $f$ are 
$$
\lambda_1(f)=\lambda_2(f)=\lambda_1(f^{-1})= \frac{1+\sqrt{5}}{2}, \; \; \; \lambda_3(f)=\lambda_3(f^{-1})=1.
$$ 
Thus $f$ is not cohomologically hyperbolic.
\end{proposition}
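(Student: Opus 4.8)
The plan is to read off the three dynamical degrees from the cohomological data already assembled, using crucially that both $f$ and $f^{-1}$ are algebraically stable on $X$, so that the induced actions iterate without correction, $f^{n*}=(f^*)^n$ on $H^{1,1}(X,\mathbf{R})$. The top degree is immediate: since $f$ lifts an automorphism of $\mathbf{C}^3$, its extension to $X$ is birational, so its topological degree --- the number of preimages of a generic point --- equals $1$; hence $\int_X f^{n*}(\omega^3)=\int_X\omega^3$ for every $n$, the defining limit is constant, and $\lambda_3(f)=\lambda_3(f^{-1})=1$.

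For the first dynamical degree I would appeal to the principle that, for an algebraically stable map, $\lambda_1(f)$ equals the spectral radius of $f^*$ on $H^{1,1}(X,\mathbf{R})$ (in the framework of \cite{Gs}). Here $H^{1,1}(X,\mathbf{R})$ is two-dimensional, spanned by $\{\widetilde{H}_{\infty}\}$ and $\{E\}$, on which $f^*$ is the matrix $\left(\begin{smallmatrix}1&1\\1&0\end{smallmatrix}\right)$ of spectral radius $\sigma=\frac{1+\sqrt5}{2}$. This dominant eigenvalue is simple and positive with an eigenvector of positive entries, so the K\"ahler class $[\omega]$ projects nontrivially onto the corresponding eigenline and $\int_X f^{n*}(\omega)\wedge\omega^2$ grows like $\sigma^n$ with no cancellation; thus $\lambda_1(f)=\sigma$. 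Since $f^{-1}$ is algebraically stable and carries the same matrix, the identical reasoning yields $\lambda_1(f^{-1})=\sigma$.

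It remains to compute $\lambda_2(f)$, where I would invoke the duality of dynamical degrees valid for any birational self-map of an $n$-dimensional projective manifold, $\lambda_k(f)=\lambda_{n-k}(f^{-1})$; with $n=3$ and $k=2$ this gives $\lambda_2(f)=\lambda_1(f^{-1})=\sigma$. Collecting the values, $\lambda_1(f)=\lambda_2(f)=\lambda_1(f^{-1})=\sigma$ and $\lambda_3(f)=\lambda_3(f^{-1})=1$. As $\lambda_1(f)=\lambda_2(f)$, no single dynamical degree strictly dominates the others, so by definition $f$ is not cohomologically hyperbolic.

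The step demanding the most care is $\lambda_2$. The analytic definition $\lambda_2(f)=\lim_n(\int_X f^{n*}(\omega^2)\wedge\omega)^{1/n}$ is not directly governed by the action of $(f^*)^n$ on $H^{2,2}(X,\mathbf{R})$, because $f^{n*}(\omega^2)$ may acquire extra mass along the indeterminacy locus. Identifying $\lambda_2$ with the cohomological spectral radius in codimension two would require verifying stability in codimension two, namely $f^{n*}=(f^*)^n$ on $H^{2,2}$, and then computing that spectral radius by hand. Passing through the duality theorem avoids re-examining the codimension-two indeterminacy behaviour and reduces everything to the $H^{1,1}$ computation for $f^{-1}$, which is already done.
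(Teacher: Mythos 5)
Your proposal is correct and follows essentially the same route as the paper, which leaves the argument implicit after the $H^{1,1}$ computation: algebraic stability identifies $\lambda_1(f)=\lambda_1(f^{-1})=\sigma$ with the spectral radius of the matrix $\left(\begin{smallmatrix}1&1\\1&0\end{smallmatrix}\right)$, the birational duality $\lambda_k(f)=\lambda_{3-k}(f^{-1})$ gives $\lambda_2(f)=\lambda_1(f^{-1})$ (this is exactly how the paper's chain of equalities is grouped), and $\lambda_3=1$ because $f$ is an automorphism. You simply make explicit the standard facts from \cite{Gs} that the paper invokes silently, including the careful point about why $\lambda_2$ should be reached through duality rather than through $H^{2,2}$.
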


There is a unique cohomology class $\alpha\in H^{1,1}(X,\mathbf{R})$ (up to a multiplicative constant) that is invariant by $ \frac{1}{\sigma}f^*$ (resp. $ \frac{1}{\sigma}f^{-1*}$). The class $\alpha$ is equal to $\{  E\}  +\sigma\{  \widetilde{H}_{\infty}\}  $.

We now study the positivity properties of the invariant class $\a$.
 
\begin{proposition}\label{21}
The invariant class $\alpha\in H^{1,1}(X, \mathbf{R})$ is K\"ahler.
\end{proposition}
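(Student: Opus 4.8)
The plan is to show that $\alpha$ lies in the interior of the nef cone of $X$. Since $X$ is a smooth projective threefold, Kleiman's criterion identifies the interior of $\mathrm{Nef}(X)$ with the ample cone, and every ample real $(1,1)$-class is K\"ahler; so it is enough to locate $\alpha$ in that interior. The decisive simplification is that $X=\mathrm{Bl}_L\,\mathbb{P}^3$ has Picard number $2$, whence $H^{1,1}(X,\mathbb{R})$ is two--dimensional and $\mathrm{Nef}(X)$ is a plane cone bounded by exactly two extremal rays. It therefore suffices to name those two rays and to check that $\alpha$ is a strictly positive combination of them.

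First I would fix the basis $\{h\},\{E\}$ of $H^{1,1}(X,\mathbb{R})$, where $h:=\pi^*\mathcal{O}_{\mathbb{P}^3}(1)$ and $E$ is the exceptional divisor, and record $\{\widetilde H_{\infty}\}=\{h\}-\{E\}$ (as $H_{\infty}$ is smooth along $L$, one has $\pi^*H_{\infty}=\widetilde H_{\infty}+E$). The two boundary rays of $\mathrm{Nef}(X)$ come from the two morphisms defined on $X$: the contraction $\pi\colon X\to\mathbb{P}^3$, and the morphism $q\colon X\to\mathbb{P}^1$ that resolves the projection of $\mathbb{P}^3$ away from $L$, i.e. the pencil of planes through $L$. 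The pullback $h=\pi^*\mathcal{O}(1)$ is nef and vanishes on the fibres of $E\to L$, hence is non--ample. On the other side, $\widetilde H_{\infty}$ is the strict transform of a plane through $L$, so it is exactly a fibre of $q$; thus $\{\widetilde H_{\infty}\}=q^*\mathcal{O}_{\mathbb{P}^1}(1)$ is nef with $\{\widetilde H_{\infty}\}^2=0$, in particular non--ample. Two distinct nef, non--ample rays in a rank--two N\'eron--Severi group are necessarily the two edges of the nef cone, so $\mathrm{Nef}(X)=\mathbb{R}_{\ge 0}\{h\}+\mathbb{R}_{\ge 0}\{\widetilde H_{\infty}\}$, with interior the classes of strictly positive coordinates in this pair.

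It then remains only to read off the coordinates of $\alpha$. Substituting $\{E\}=\{h\}-\{\widetilde H_{\infty}\}$ into $\alpha=\{E\}+\sigma\{\widetilde H_{\infty}\}$ yields $\alpha=\{h\}+(\sigma-1)\{\widetilde H_{\infty}\}$, and both coefficients, $1$ and $\sigma-1=\tfrac{\sqrt{5}-1}{2}$, are strictly positive. Hence $\alpha$ is interior to $\mathrm{Nef}(X)$, and is therefore K\"ahler. As an independent check one can compute the triple intersection numbers on the blow--up, $h^3=1$, $h^2\cdot E=0$, $h\cdot E^2=-1$ and $E^3=-2$ (the last two governed by $\deg N_{L/\mathbb{P}^3}=2$); these give $\alpha^3=3\sigma-2>0$ and $\alpha^2\cdot E=2(\sigma-1)>0$, consistent with ampleness.

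The main obstacle is precisely the identification of the two extremal nef rays: one must verify rigorously that $h$ and $\widetilde H_{\infty}$ are each nef but not ample, and then deduce extremality from $\rho(X)=2$, while keeping the signs in the blow--up intersection numbers correct. A more computational alternative would be to apply the Nakai--Moishezon criterion directly to $\alpha$, checking $\alpha^3>0$, $\alpha^2\cdot S>0$ for every irreducible surface $S$, and $\alpha\cdot C>0$ for every irreducible curve $C$; but this needs control of all subvarieties, equivalently of the full cone of curves, which is exactly what the two--fibration description supplies for free. I therefore expect the nef--cone route to be both shorter and less error--prone.
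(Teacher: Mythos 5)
Your argument is correct, and it takes a genuinely different route from the paper's. The paper proceeds analytically: it represents $\{\widetilde H_{\infty}\}$ by an explicit smooth semi-positive form $\omega''=\pi^*\omega'-[E]$, where $\omega'$ is the curvature of a singular positive metric on $\mathcal{O}_{\mathbb{P}^3}(H_{\infty})$ with Lelong number $1$ along $L$ (made explicit in Remark \ref{prooout!}), then invokes the classical blow-up fact that one can choose a smooth closed $\theta\in-\{E\}$ with $\pi^*\omega_{FS}+\varepsilon\theta>0$ for small $\varepsilon>0$, and finally writes $\alpha=(1+\varepsilon)\pi^*\{H_{\infty}\}+\varepsilon\{\theta\}+(\sigma-1-\varepsilon)\{\widetilde H_{\infty}\}$ as a K\"ahler class plus semi-positive classes. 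You argue cone-theoretically instead: Picard number $2$, identification of the two extremal nef rays via the two morphisms $\pi\colon X\to\mathbb{P}^3$ and $q\colon X\to\mathbb{P}^1$, then Kleiman's criterion together with ``ample real class implies K\"ahler''. The two proofs share their geometric core, namely nefness of $\{\widetilde H_{\infty}\}$; in fact your fibration $q$ is a corrected version of the paper's own ``reformulation'', which tries to exhibit $\{\widetilde H_{\infty}\}$ as a pull-back under the projection $[X,Y,Z,T]\mapsto[X,Y,0,T]$ to $\mathbb{P}^2$ --- but that projection is centered at the single point $p^-\in L$, and its indeterminacy is not resolved by blowing up $L$ alone, whereas your projection away from the whole line $L$, onto the pencil $\mathbb{P}^1$ of planes through $L$, genuinely becomes a morphism on $X$ with $\widetilde H_{\infty}$ as a fibre. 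What your route buys: it is shorter, needs no metric constructions, and yields the full nef cone, hence K\"ahlerness of every class $a\{h\}+b\{\widetilde H_{\infty}\}$ with $a,b>0$. What the paper's route buys: explicit (semi-)positive representatives and potentials, which the sequel genuinely uses --- the K\"ahler form $\omega=\pi^*\omega_{FS}+(\sigma-1)\omega''$, the Green function $G^+$ of (\ref{green}), and the volume computation of Proposition \ref{big} are all built from them, so the analytic detour is not wasted in context. Note finally that your value $\alpha^3=3\sigma-2$ matches the paper's $\mathrm{Vol}(\alpha)=1+3/\sigma$ from Proposition \ref{big}, since $\sigma^{-1}=\sigma-1$; and the two small points you leave implicit --- non-proportionality of $\{h\}$ and $\{\widetilde H_{\infty}\}$ (clear from $h^3=1\neq 0=\{\widetilde H_{\infty}\}^3$), and the fact that an ample real class is K\"ahler because it is a positive combination of integral ample classes --- are easily supplied.
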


\begin{proof}

{\bf{We start by showing that $\alpha$ is semi-positive.}} We equip the line bundle $\mathcal{O}_{\mathbb{P}^3}(H_{\infty})$ with a singular positive metric (\cite{D}) whose curvature form $\omega'$ is smooth outside the line $L=(Y=T=0)$ and which has 
(homogeneous) Lelong number equal to $1$ along this projective line. 
Note that $\omega'':=\pi^*(\omega')-[E]$ is a smooth semi-positive metric on the line bundle $\mathcal{O}_{X}(\widetilde{H}_{\infty})\simeq\pi^*\mathcal{O}_{\mathbb{P}^3}({H}_{\infty})\otimes\mathcal{O}_{X}(-E)$ 
which is therefore semi-positive. Thus the invariant class $\alpha=\{ \pi^*[H_{\infty}]+(\sigma-1)[\widetilde{H}_{\infty}]\} $ is semi-positive.

\smallskip

\noindent{\bf{The class $\alpha$ is  K\"ahler.}} Let $\omega_{FS}$ denote the Fubini-Study K\"ahler form on $\mathbb{P}^3$. It is cohomologous to $[H_{\infty}]$. It is classical (see e.g. \cite{GH}) that there exists a smooth closed form $\theta\in-\{ E\}$ such that $\pi^*\omega_{FS}+\varepsilon\theta>0$ for $\varepsilon>0$ sufficiently small. So 
$$
\alpha=(1+\varepsilon)\pi^*\{  H_{\infty}\}  +\varepsilon \{ \theta \} + (\sigma-1-\varepsilon)\{  \widetilde{H}_{\infty}\}  
$$
is  K\"ahler.\\
Here is a slight reformulation: let $\varphi:\mathbb{P}^3\rightarrow\P2$ be the projection on the hyperplane $(Z=0)\approx \mathbf{P}^2$ orthogonal to the line $L=(Y=T=0)$, $$\varphi([X,Y,Z,T])=[X,Y,0,T].$$
Then, after blowing-up $L$, the meromorphic application $\varphi$ induces a holomorphic map $\Phi: X\rightarrow \mathbf{P}^2$. We have 
$$
\begin{array}{rl} 0\leq \Phi^*\mathcal{O}_{\mathbf{P}^2}\left(Z=T=0\right) & \approx \pi^*\mathcal{O}_{\mathbf{P}^3}(T=0)\otimes
\mathcal{O}_{X}\left(-E \right)\\ 
   &  \\
  &   \approx \pi^*\mathcal{O}_{\mathbf{P}^3}(1)\otimes
\mathcal{O}_{X}\left(-E\right)\approx\mathcal{O}_X\left(\widetilde{H}_{\infty}\right)\end{array}
$$
where we have denoted $\approx$ the isomorphism between holomorphic line bundles. Therefore $ \{  \widetilde{H}_{\infty}\}  = c_1 \left( \mathcal{O}_X\left(\widetilde{H}_{\infty}\right)  \right)\geq 0$. So $ \{  \widetilde{H}_{\infty}\} $ can be represented by a smooth $(1,1)$-form that is positive on $E$. Since $\pi^* \omega_{FS} $ is smooth and positive except on $E$, we conclude that $\alpha = \pi^*\{  H_{\infty}\}  + (\sigma-1)\{  \widetilde{H}_{\infty}\}  $ is  K\"ahler.\qed
\end{proof}

\begin{remark}\label{prooout!}
We can describe explicitly in $\mathbb{C}^3$ the $(1,1)$-form $\omega'$ as follows. The function $\frac{1}{2}\log(|y|^2+|t|^2)$ defined in $\mathbb{C}^3=(x,y,t)$ is in the Lelong class $\mathcal{L}(\mathbf{C}^3)$, i.e. we have asymptotically $\frac{1}{2}\log(|y|^2+|t|^2)\leq\frac{1}{2}\log(1+|x|^2+|y|^2+|z|^2)+O(1)$ (\cite{GZ}). The function $g:=\frac{1}{2}\log(|y|^2+|t|^2)-\frac{1}{2}\log(1+|x|^2+|y|^2+|t|^2)$ thus extends as a $\omega_{FS}$-plurisubharmonic function on $\mathbb{P}^3$. 
This extension (that we call $g$ again) writes in homogeneous coordinates $g=\frac{1}{2}\log(|Y|^2+|T|^2)-\frac{1}{2}\log(|X|^2+|Y|^2+|Z|^2+|T|^2)$
and $\omega'=\omega_{FS}+dd^c g$. 
 \end{remark}

We now compute the volume of the class $\alpha$. The intermediate computations in the proof will also be useful in the sequel.

\begin{proposition}\label{big}The class $\alpha=\{  E\} +\sigma \{  \widetilde{H}_{\infty}\} $ has volume $$ \dis \mathrm{Vol}(\alpha)= \alpha ^3=1+\frac{3}{\sigma}>0.$$
\end{proposition}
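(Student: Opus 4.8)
The plan is to compute $\alpha^3 = (\{E\} + \sigma\{\widetilde{H}_{\infty}\})^3$ by expanding the cube and evaluating each intersection number on $X$, the blow-up of $\mathbb{P}^3$ along the line $L$. The most efficient approach is to pull everything back to $\mathbb{P}^3$ using the description $\{\widetilde{H}_{\infty}\} = \pi^*\{H_{\infty}\} - \{E\}$ established in the proof of Proposition \ref{21}. Writing $h := \pi^*\{H_{\infty}\} = \pi^*\{H\}$ (the pullback of a hyperplane class) and $e := \{E\}$, the invariant class becomes
$$
\alpha = e + \sigma(h - e) = \sigma h - (\sigma - 1)e.
$$
Expanding $\alpha^3$ then reduces the whole computation to the standard intersection numbers $h^3$, $h^2 e$, $h e^2$, and $e^3$ on the blow-up of a smooth projective threefold along a smooth curve.

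First I would record these four numbers. Since $\pi$ has one-dimensional fibers over the line $L$, the projection formula gives $h^3 = \{H\}^3 = 1$ and $h^2 e = 0$ (as $h^2$ is represented by a point off $L$, disjoint from $E$). The genuinely geometric inputs are $h e^2$ and $e^3$: for a blow-up along a smooth curve $C$ of genus $g$ in a threefold, one has $h e^2 = -\deg(N_{C})\cdot(\text{something})$ and $e^3 = -\deg N_{C|\mathbb{P}^3}$, computed from the normal bundle of $L$ in $\mathbb{P}^3$. Here $L$ is a line $\cong \mathbb{P}^1$ with normal bundle $N_{L} \cong \mathcal{O}(1)\oplus\mathcal{O}(1)$, so $\deg N_L = 2$; the exceptional divisor $E \cong \mathbb{P}(N_L)$ is a ruled surface over $\mathbb{P}^1$, and the classical formulas yield $h\cdot e^2 = -\deg(L) = -1$ and $e^3 = -\deg N_L = -2$ (up to sign conventions I would fix carefully against the adjunction and the relation $e|_E = -\zeta$, where $\zeta$ is the tautological class on $\mathbb{P}(N_L)$). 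These are the values I would verify.

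With the four numbers in hand, I would substitute. Expanding,
$$
\alpha^3 = \sigma^3 h^3 - 3\sigma^2(\sigma-1)h^2 e + 3\sigma(\sigma-1)^2 h e^2 - (\sigma-1)^3 e^3,
$$
and plugging in $h^3 = 1$, $h^2 e = 0$, $h e^2 = -1$, $e^3 = -2$ gives
$$
\alpha^3 = \sigma^3 - 3\sigma(\sigma-1)^2 + 2(\sigma-1)^3.
$$
At this point I would exploit the defining relation $\sigma^2 = \sigma + 1$ of the golden mean to simplify. Repeatedly reducing powers of $\sigma$ via $\sigma^2 = \sigma+1$ and $\sigma^3 = \sigma\cdot\sigma^2 = \sigma^2 + \sigma = 2\sigma + 1$, the polynomial should collapse to a clean expression, which the Proposition asserts equals $1 + \tfrac{3}{\sigma}$. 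Using $\tfrac{1}{\sigma} = \sigma - 1$, the target is $3\sigma - 2$, and I would confirm the expanded polynomial reduces to exactly this.

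The main obstacle is \emph{sign and convention bookkeeping} for the self-intersection numbers $he^2$ and $e^3$ of the exceptional divisor. These depend on the orientation of the tautological class on $\mathbb{P}(N_L)$ and on whether one uses $\mathcal{O}(1)$ or $\mathcal{O}(-1)$ conventions; a single sign error propagates through the golden-mean reduction and destroys the identity. To guard against this, rather than trusting memorized formulas I would cross-check the two values by an independent route: $e^3$ can be pinned down from the blow-up formula for the top Chern class or from $(\widetilde{H}_{\infty})^3$ computed directly in the affine charts of \eqref{eq} and \eqref{eq1}, and consistency with $\mathrm{Vol}(\alpha) > 0$ (forced by the Kählerness from Proposition \ref{21}) serves as a sanity filter on the final answer.
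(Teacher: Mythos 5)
Your proof is correct, and it takes a genuinely different route from the paper's. The paper argues analytically: using the potentials of Remark \ref{prooout!}, it represents $\alpha$ in $\mathbb{C}^3$ by $dd^c(u+v)$ with $u=\frac{1}{2\sigma}\log(|y|^2+1)$ and $v=\frac{1}{2}\log(|x|^2+|y|^2+|z|^2+1)$, and evaluates $\int_{\mathbb{C}^3}(dd^c u+dd^c v)^3$; since $u$ depends on $y$ alone, $(dd^c u)^2=0$, so only the terms $\int(dd^c v)^3=1$ and $3\int(dd^c v)^2\wedge dd^c u=3/\sigma$ survive. Your computation is instead purely intersection-theoretic, and your four numbers $h^3=1$, $h^2e=0$, $he^2=-1$, $e^3=-2$ are correct; the golden-mean reduction you left implicit does close: $\sigma^3-3\sigma(\sigma-1)^2+2(\sigma-1)^3=3\sigma-2=1+\frac{3}{\sigma}$. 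The sign-convention worry you flag can be eliminated without any appeal to $\mathcal{O}_{\mathbb{P}(N_L)}(\pm 1)$ conventions: the strict transforms of the hyperplanes containing $L$ are the fibres of the morphism $X\to\mathbb{P}^1$ defined by the pencil, they lie in the class $h-e$, and distinct fibres are disjoint and linearly equivalent, so $(h-e)^2=0$; expanding gives $e^2=2he-h^2$, hence $he^2=2h^2e-h^3=-1$ and $e^3=2he^2-h^2e=-2$ at once. Even better, writing $\alpha=h+(\sigma-1)(h-e)$ and using $(h-e)^2=0$ collapses the cube to $\alpha^3=h^3+3(\sigma-1)\,h^2\cdot(h-e)=1+3(\sigma-1)=1+\frac{3}{\sigma}$, which is exactly the cohomological counterpart of the paper's cancellation $(dd^cu)^2=0$. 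One minor slip in your write-up: $h^2$ is represented by the pullback of a general \emph{line} of $\mathbb{P}^3$ (not a point) disjoint from $L$; the conclusion $h^2e=0$ is unaffected. As for what each approach buys: yours is representative-free and reduces the proposition to standard blow-up formulas, while the paper's analytic route produces the explicit potentials that are reused in the sequel (they are precisely the ones entering the Green function (\ref{green})), which is why the paper phrases the proof that way.
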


\begin{proof}
A potential of $\omega\in \alpha$ in $\mathbb{C}^3$ is $\dis\frac{1}{2\sigma}\log(|y|^2+1)+\frac{1}{2}\log(|x|^2+|y|^2+|z|^2+1).
$
Set $\dis u:=\frac{1}{2\sigma}\log(|y|^2+1)$ and $\dis v:=\frac{1}{2}\log(|x|^2+|y|^2+|z|^2+1)$. Then
$$
\text{Vol}(\alpha) = \dis \alpha^3   =\int_{\mathbb{C}^3} \left( dd^c u+dd^c v\right)^3 
=\int_{\mathbb{C}^3}\left(dd^c v\right)^3+3(dd^c v)^2\wedge dd^c u
= 1+\frac{3}{\sigma}. 
$$
\flushright \qed
\end{proof}

\section{Invariant currents}

\subsection{The Green currents $T_f$ and $T_{f^{-1}}$}

Recall that $\omega_{FS}$ is the Fubini-Study form on $\mathbb{P}^3$ and that $\omega''$ is a smooth $(1,1)$-form on $X$ cohomologous to $\widetilde{H}_{\infty}$. It follows from the previous section that $\omega:=\pi^*\omega_{FS}+(\sigma-1)\omega''$ is a  K\"ahler form representing the invariant class $\alpha=\{  E\} +\sigma\{ \widetilde{H}_{\infty}\} $. Since $f^*\alpha=\sigma\alpha$, it follows from the $dd^c$-lemma that there exists an 
$\omega$-plurisubharmonic function $\phi$ such that 
$
 \frac{1}{\sigma}f^*\omega=\omega+dd^c \phi.
 $
 We normalize $\phi$ so that $ \sup_X \phi = 0$. 
 
 We refer the reader to \cite{GZ} for basic properties of $\omega$-plurisubharmonic functions.
 The sequence of positive currents 
 $$
  T_n:=\frac{1}{\sigma^n}f^{n*}\omega=\omega+dd^c \left( \sum_{i=0}^{n-1}\frac{1}{\sigma^i}\phi\circ f^i \right)
  $$
 converges towards a positive closed current $T_f$, since the decreasing sequence of $\omega$-plurisubharmonic functions  
 $\sum_{i=0}^{n-1}\frac{1}{\sigma^i}\phi\circ f^n$ converges in $L^1(X)$ 
 (it does not converge identically to $-\infty$ by an argument due to Sibony, see e.g. the proof of Theorem 2.1 in \cite{G}). 
 By construction, the current $T_f$ is invariant by $ \frac{1}{\sigma}f^*$.
 
Note that the "Green function", i.e. the potential of $T_f$ in $\mathbf{C}^3$ 
\begin{equation}\label{green}
G^+=\lim_{n\rightarrow+\infty}\frac{1}{\sigma^n}{f^n}^*\left(\frac{1}{2\sigma}\log(|y|^2+1)+\frac{1}{2}\log(|x|^2+|y|^2+|z|^2+1)\right)
\end{equation}
(See Remark \ref{prooout!}) 
is well defined and plurisubharmonic in $\mathbb{C}^3$. We construct in the same way the invariant current  for $f^{-1}$,
$$ 
T_{f^{-1}}:=\lim_{n\rightarrow +\infty}\frac{1}{\sigma^n}(f^{-n})^*\omega.
$$

Let $\mathcal{B}_{\infty}^+$ (resp. $\mathcal{B}_{\infty}^-$) denote the basin of attraction of the superattractive point $p^-$ (resp. $p^+$) at infinity.  
The following result is an easy adaptation (a special case) of Theorem 2.2 in \cite{GS2}:

\begin{proposition}\label{julia}
The potential $G^+\geq 0$ (resp. $G^-\geq 0$) of the current $T_f$ (resp. $T_{f^{-1}}$) is pluriharmonic in $\mathcal{B}_{\infty}^+$ (resp. $\mathcal{B}_{\infty}^-$). Moreover 
$$
\mathcal{B}_{\infty}^+=(G^+>0), \; \; \mathcal{B}_{\infty}^-=(G^->0) 
$$
and $G^+, G^-$ are continuous in $\mathbb{C}^3$.
We also have $\text{supp}(T_{f^{\pm}})\cap D_{\infty}=I^{\pm}$.
\end{proposition}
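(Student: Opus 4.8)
Here is how I would approach Proposition~\ref{julia}. Write $P:=\frac{1}{2\sigma}\log(|y|^2+1)+\frac{1}{2}\log(|x|^2+|y|^2+|z|^2+1)$ for the potential appearing in (\ref{green}), so that $G^+=\lim_n \sigma^{-n}P\circ f^n$ and $T_f=dd^c G^+$ on $\mathbb{C}^3$. Two remarks organise everything. First, $P\ge 0$ and $f$ preserves $\mathbb{C}^3$, so each term $\sigma^{-n}P\circ f^n$ is nonnegative and $G^+\ge 0$. Second, reindexing the defining limit gives the exact identity $G^+\circ f=\sigma G^+$ on $\mathbb{C}^3$. Since $T_f=dd^c G^+$, proving that $G^+$ is pluriharmonic on an open set is the same as proving $T_f=0$ there, so the proposition is really a description of where the mass of $T_f$ sits, controlled by the forward dynamics towards the superattractive point $p^-$. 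The assertions for $f^{-1}$, $G^-$ and $p^+$ will follow by the symmetry relating $f$ and $f^{-1}$.

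The analytic core is locally uniform convergence of $\sigma^{-n}P\circ f^n$ on $\mathbb{C}^3$, which yields both continuity of $G^+$ and its positivity near $p^-$. Writing $g:=\sigma^{-1}P\circ f-P$, one has the telescoping series $G^+-P=\sum_{n\ge 0}\sigma^{-n}\,g\circ f^n$, so I would bound $g$ along forward orbits. On a compact $K\Subset\mathbb{C}^3$ the map $f$ is biholomorphic and $g$ is smooth; the only issue is the behaviour of $g\circ f^n$ once $f^n(K)$ escapes towards $D_\infty$, i.e. towards $p^-$. Here the golden-mean identity $\sigma^2=\sigma+1$ is decisive: in the superattractive chart (\ref{eq}), where $E=(\omega_2=0)$, $\widetilde{H}_\infty=(\omega_3=0)$ and $A=1+b\omega_2\omega_3+c\omega_3+d\omega_1\omega_3\to 1$, the model weight $H:=-\log|\omega_2|-\sigma\log|\omega_3|$ satisfies $H\circ f=\sigma H+\log|A|$ exactly because $\sigma^2=\sigma+1$; this cancellation shows the two potentials $P$ and $\sigma^{-1}P\circ f$ grow at matching rates along the orbit, so $g$ stays bounded near $p^-$ and along orbits converging to it. Summing the series then gives locally uniform convergence, hence continuity of $G^+$ on $\mathbb{C}^3$, together with a bound $G^+\ge P-C$; since $P\to+\infty$ as one approaches $p^-$ inside $\mathbb{C}^3$, this forces $G^+\to+\infty$ there, in particular $G^+>0$ near $p^-$.

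For pluriharmonicity on the basin I would use superattractivity directly. In the chart (\ref{eq}) the differential of $f$ at $p^-$ is nilpotent, so there is a neighbourhood $U\subset X$ of $p^-$ with $f(U)\Subset U$ on which $f^n\to p^-$ in the $C^1$ topology, i.e. $Df^n\to 0$ uniformly. As $\omega$ is smooth on $X$, this forces $f^{n*}\omega\to 0$ uniformly on $U$, whence $T_f=\lim_n\sigma^{-n}f^{n*}\omega=0$ on $U$, so $G^+$ is pluriharmonic on $U\cap\mathbb{C}^3$. The functional equation propagates this: for $q\in\mathcal{B}_\infty^+$ there is $N$ with $f^N(q)\in U$, and since $f^N$ is holomorphic near $q$ the identity $G^+=\sigma^{-N}G^+\circ f^N$ exhibits $G^+$ as pluriharmonic near $q$. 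Running the same argument on $X$ shows $T_f=0$ near every point of $D_\infty\setminus I^+$: by the dynamical summary preceding Proposition~\ref{i} each such point is carried into $U$ by some iterate, whereas points of $I^+$ have no well-defined forward orbit. This gives $\text{supp}(T_f)\cap D_\infty\subseteq I^+$.

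It then remains to characterise the basin and to get the reverse support inclusion. If the forward orbit of $q\in\mathbb{C}^3$ stays in a bounded region, then $P\circ f^n$ is bounded and $G^+(q)=\lim_n\sigma^{-n}P(f^n q)=0$; hence $G^+(q)>0$ forces the orbit to leave every compact set, and since $p^-$ is the only forward-attracting locus at infinity this places $q$ in $\mathcal{B}_\infty^+$. Conversely $G^+>0$ near $p^-$ by the second paragraph, and the functional equation spreads positivity over all of $\mathcal{B}_\infty^+$, giving $\mathcal{B}_\infty^+=(G^+>0)$. Finally $\text{supp}(T_f)\cap D_\infty\supseteq I^+$: a point of $I^+=\bigcup_n I_{f^n}$ is not in the basin, and the invariance $f^*T_f=\sigma T_f$ concentrates mass of the Green current along the indeterminacy set, a standard feature of algebraically stable maps; together with the previous inclusion this yields $\text{supp}(T_{f^{\pm}})\cap D_\infty=I^{\pm}$. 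The one genuinely delicate step is the uniform bound on $g$ along escaping orbits in the second paragraph, that is, the matching of the growth of the two potentials near $p^-$; this is exactly where the arithmetic of $\sigma$ enters, and it is the content I would borrow from the proof of Theorem~2.2 in \cite{GS2}.
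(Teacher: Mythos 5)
Your overall route is the paper's route: the paper proves Proposition~\ref{julia} simply by invoking Theorem~2.2 of \cite{GS2} as ``an easy adaptation'', and your sketch reconstructs exactly that argument; moreover your chart computations are correct and are indeed the heart of the adaptation (the identity $H\circ f=\sigma H+\log|A|$ from $\sigma^2=\sigma+1$, and the matching $P=H+O(1)$ near $p^-$, which uses $\sigma-1=1/\sigma$). But one step of your argument is genuinely broken: the inclusion $(G^+>0)\subset\mathcal{B}_\infty^+$. You argue that $G^+(q)>0$ forces the orbit of $q$ to leave every compact set, and then conclude $q\in\mathcal{B}_\infty^+$ ``since $p^-$ is the only forward-attracting locus at infinity''. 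For this map, the implication ``escaping orbit $\Rightarrow$ attracted to $p^-$'' is false, and the paper itself proves it is false: Section~4 (Lemma~\ref{W}, Proposition~\ref{pro.partition}, Lemma~\ref{below}) produces nonempty sets $W',W''\subset\mathbb{C}^3$ (at least when $|d|>1$) of points whose forward orbits tend to infinity but accumulate on the points $A^\pm, C, B, B'$ of $I^+\cap I^-$, cycling with period $3$ or $5$; such orbits never enter the basin of $p^-$ (consistently with the proposition, $G^+=0$ on them, cf.\ Remark~\ref{serviraplutar}, since they lie on $\mathrm{supp}(T_f)$). This phenomenon is precisely the non-cohomologically-hyperbolic feature the whole paper is about, so it cannot be assumed away, and a proof of Proposition~\ref{julia} valid for all admissible parameters cannot rest on a principle that fails for $|d|>1$. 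A correct argument must separate growth rates: from $G^+\circ f^n=\sigma^n G^+$ and $G^+\le P+O(1)$ one gets that $G^+(q)>0$ forces $\log^+\parallel f^n(q)\parallel\geq c\,\sigma^n$, whereas the orbits escaping along $I^+\cap I^-$ satisfy $\log^+\parallel f^{3n}\parallel\sim n\log |d|$ (Lemma~\ref{below}); one then needs a filtration-type local analysis near $D_\infty$ showing that $\sigma^n$-growth of the log-norm forces entry into the attracting neighborhood of $p^-$. That analysis is the real substance of the weak-regularity argument behind Theorem~2.2 of \cite{GS2}; uniqueness of the attracting point does not replace it.

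Two smaller gaps in the same vein. First, the inclusion $I^+\subset\mathrm{supp}(T_f)$ is asserted (``the invariance concentrates mass along the indeterminacy set'') rather than proved; the paper obtains the stronger statement that $T_f$ has positive Lelong numbers along $I^+$ by citing Proposition~6.3 of \cite{DF} (see the proof of Proposition~\ref{cluster}), and your sentence should be replaced by such a reference or an actual Lelong-number estimate. Second, your uniform control of $g=\sigma^{-1}P\circ f-P$ is established only near $p^-$ and along bounded orbits, but continuity of $G^+$ must also be proved at points of $W'\cup W''$ and near them; there the orbits pass repeatedly close to $I^+$, where $g$ is unbounded below (for instance $g\approx-(1-\sigma^{-1})\log|x|\to-\infty$ as $|x|\to\infty$ with $y,z$ bounded, i.e.\ near $p^+$), so the locally uniform convergence of the telescoping series at those points is exactly where the delicate cancellation happens and is not covered by your estimates.
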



\subsection{The invariant current $T_f\wedge T_{f^{-1}}$ }

The $(2,2)$-current $T_f\wedge T_{f^{-1}}$ is well defined in $\mathbf{C}^3$ according to \cite{D1}.  

\begin{proposition}\label{cluster}
The support of the current $T_f\wedge T_{f^{-1}}$ clusters at infinity exactly along $I^+\cap I^-$, where it has positive Lelong numbers. 
\end{proposition}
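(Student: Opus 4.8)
The strategy is to bound the clustering from above by the supports of the two factors, and from below by producing a strictly positive Lelong number at each of the finitely many points of $I^+\cap I^-$.

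\emph{Upper bound.} First I would record the elementary inclusion $\mathrm{supp}(T_f\wedge T_{f^{-1}})\subset\mathrm{supp}(T_f)\cap\mathrm{supp}(T_{f^{-1}})$, valid wherever the product is defined, i.e. on $\mathbb{C}^3$. By Proposition \ref{julia} we have $\mathrm{supp}(T_f)\cap D_\infty=I^+$ and $\mathrm{supp}(T_{f^{-1}})\cap D_\infty=I^-$. Hence if $q\in D_\infty\setminus(I^+\cap I^-)$, then $q$ lies off $I^+$ or off $I^-$; in the first case $q\notin\mathrm{supp}(T_f)$, so $T_f$ vanishes on a neighbourhood $U$ of $q$ in $X$ and thus $T_f\wedge T_{f^{-1}}=0$ on $U\cap\mathbb{C}^3$, and the second case is symmetric. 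Therefore the closure in $X$ of $\mathrm{supp}(T_f\wedge T_{f^{-1}})$ meets $D_\infty$ only inside $I^+\cap I^-$.

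\emph{Positive Lelong numbers of the factors.} The heart of the matter is to show $\nu(T_f,q)>0$ and $\nu(T_{f^{-1}},q)>0$ at every $q\in I^+\cap I^-$. Write $T_f=\omega+dd^c\Phi$ with $\Phi=\lim_n\Phi_n$, $\Phi_n:=\sum_{i=0}^{n-1}\sigma^{-i}\phi\circ f^i$, the decreasing limit used to define $T_f$, so that $\nu(T_f,q)=\nu(\Phi,q)$. Since $\Phi\le\Phi_n$ and a smaller quasi-plurisubharmonic potential is the more singular one, Lelong numbers reverse this order, giving $\nu(T_f,q)\ge\nu(\Phi_n,q)=\sigma^{-n}\,\nu(f^{n*}\omega,q)$ for every $n$. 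It thus suffices to check $\nu(f^{n*}\omega,q)>0$ for a single $n\ge 2$. Now $q\in I^+=I_{f^2}$ is a genuine point of indeterminacy of $f^2$: passing to a resolution $p:\widetilde X\to X$ of $f^2$, the fibre $p^{-1}(q)$ is positive-dimensional and is carried by the lifted holomorphic map onto a positive-dimensional subset of $X$ (indeterminacy points blow up onto curves, as recorded in Lemma \ref{indeterminate} and Proposition \ref{i}). As $\omega$ is K\"ahler, its integral over that image is strictly positive, and pushing forward yields $\nu(f^{2*}\omega,q)>0$; hence $\nu(T_f,q)>0$. The symmetric argument, using $q\in I^-=I_{f^{-2}}$, gives $\nu(T_{f^{-1}},q)>0$.

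\emph{Lower bound via Demailly's comparison.} It remains to transfer positivity to the product. The unbounded loci of the $\omega$-plurisubharmonic potentials of $T_f$ and $T_{f^{-1}}$ are contained in $I^+$ and $I^-$ respectively (recall that $G^+,G^-$ are continuous in $\mathbb{C}^3$ by Proposition \ref{julia}), so they meet in the finite set $I^+\cap I^-$, of dimension $0\le 3-2$. By the Bedford--Taylor--Demailly theory \cite{D}, the product $T_f\wedge T_{f^{-1}}$ is then well defined on a full neighbourhood of each such $q$ in $X$, extending across $D_\infty$ its definition in $\mathbb{C}^3$ from \cite{D1}. Demailly's comparison inequality for Lelong numbers of intersections now gives $\nu(T_f\wedge T_{f^{-1}},q)\ge\nu(T_f,q)\,\nu(T_{f^{-1}},q)>0$, so $q\in\mathrm{supp}(T_f\wedge T_{f^{-1}})$ and the current clusters at $q$. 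Combined with the upper bound, this shows the clustering occurs exactly along $I^+\cap I^-$, with positive Lelong numbers there. The main obstacle is the middle step: one must ensure that the indeterminacy of $f^{2}$ (resp. $f^{-2}$) at each of the five (or four) points genuinely produces a positive-dimensional image, which I would verify case by case in the explicit charts of Proposition \ref{i}; the remainder is monotonicity of Lelong numbers and Demailly's theorem.
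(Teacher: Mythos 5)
Your proposal has the same overall architecture as the paper's proof: the upper bound is exactly the paper's argument (Proposition \ref{julia} gives $\text{supp}(T_{f^{\pm 1}})\cap D_{\infty}=I^{\pm}$, hence no clustering off $I^+\cap I^-$), and the lower bound runs through Demailly's comparison inequality $\nu(T_f\wedge T_{f^{-1}},q)\geq \nu(T_f,q)\,\nu(T_{f^{-1}},q)$, which is precisely the Corollary 5.10 of \cite{D1} that the paper invokes. The genuine divergence is upstream: for the positivity of $\nu(T_f,q)$ at $q\in I^+$, the paper simply cites Proposition 6.3 of \cite{DF} --- a result proved there for surfaces --- whereas you derive it from monotonicity of the decreasing potentials ($\nu(T_f,q)\geq \sigma^{-n}\nu(f^{n*}\omega,q)$, which is correct) together with the claim $\nu(f^{2*}\omega,q)>0$ at points of $I_{f^2}$. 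This substitution is worth making: it is self-contained and avoids transplanting a two-dimensional theorem into a threefold. Your explicit verification that the wedge product is defined in a full neighbourhood of each $q\in I^+\cap I^-$ (the unbounded loci of the two potentials meet $D_\infty$ only in the finite set $I^+\cap I^-$) is also a point the paper needs but leaves tacit: one cannot speak of Lelong numbers of $T_f\wedge T_{f^{-1}}$ at points of $D_\infty$ before knowing the product exists there.

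The soft spot is that the step you treat as routine --- ``pushing forward yields $\nu(f^{2*}\omega,q)>0$'' --- is exactly the nontrivial content that the paper outsources to \cite{DF}. Knowing that $f^2$ blows $q$ up onto a positive-dimensional set of positive $\omega$-mass does not, by itself, produce a Lelong number for the pushed-forward current; that implication requires an actual computation or comparison. Conversely, the step you single out as ``the main obstacle'' (checking case by case that the images are positive-dimensional) is free: $q\in I_{f^2}$ means by definition that the lifted map is nonconstant on the fibre over $q$, and the images are already recorded in Lemma \ref{indeterminate} and Proposition \ref{i}. A clean way to close the real gap: $X$ is projective, so fix an embedding $\iota: X\hookrightarrow \mathbb{P}^N$ and $\varepsilon>0$ with $\omega\geq \varepsilon\,\iota^*\omega_{FS}$ on the compact $X$; near $q$ write $\iota\circ f^2=[h_0:\cdots:h_N]$ with the $h_j$ holomorphic and without common factor, so that $f^{2*}\omega\geq \varepsilon\, dd^c \tfrac{1}{2}\log\left(|h_0|^2+\cdots+|h_N|^2\right)$ as currents near $q$; since every $h_j$ vanishes at the indeterminacy point $q$, one has $\tfrac{1}{2}\log\sum_j |h_j|^2\leq \log\|z-q\|+O(1)$, whence $\nu(f^{2*}\omega,q)\geq\varepsilon>0$. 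With that lemma inserted, your argument is complete and, arguably, an improvement on the paper's citation.
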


\begin{proof}
We know by Proposition 6.3 in \cite{DF} that $T_f$ (resp. $T_{f^{-1}}$) has positive Lelong numbers at $I^+$ (resp. $I^-$). Thus $T_f\wedge T_{f^{-1}}$ has positive Lelong numbers at $I^+\cap I^-$ by Corollary 5.10 in \cite{D1}. So $ I^+\cap I^-\subset \text{supp}\left(T_f\wedge T_{f^{-1}}\right)$. 

Now take $ x\in D_{\infty}\setminus I^+\cap I^-$. 
By symmetry we can assume $x\notin I^+$. 
We know by Proposition \ref{julia} that $x\notin \text{supp}(T_f)$,
hence $x \notin \text{supp}(T_f \wedge T_{f^{-1}})$. Therefore $ I^+\cap I^- = \text{supp}\left(T_f\wedge T_{f^{-1}}\right)\cap D_{\infty}$.\qed
\end{proof}

The current $T_f\wedge T_{f^{-1}}$ is invariant by $  f^*$ in $\mathbf{C}^3$ since
\begin{equation}\label{bof}  
f^*(T_f\wedge T_{f^{-1}})=f^*T_f\wedge f^*T_{f^{-1}}=\sigma T_f\wedge \frac{1}{\sigma}T_{f^{-1}}= T_f\wedge T_{f^{-1}}.
\end{equation}
Note that this equality does not hold at infinity in $X$, because $  f^*\left(T_f\wedge T_{f^{-1}}\right)\geq T_f\wedge T_{f^{-1}}+a[\mathcal{C}^+]$, $a>0$, since $\nu(f^*S,z)\geq \nu(S,f(z))$. Of course $T_f\wedge T_{f^{-1}}$ is also invariant by $(f^{-1})^*$ in $\mathbb{C}^3$.


\smallskip

Set $J:=\text{supp}\left(T_f\wedge T_{f^{-1}}\right) $. Recall that $J\cap D_{\infty}$ consists in five points (Proposition \ref{i}). 
Using the same notations as in Proposition \ref{i}, we denote by:
\begin{itemize}
\item  $ A^+\in J\cap E$ the only point in $\mathcal{C'}^+$;\\
\item  $ A^-\in J\cap E$ the only point in $\mathcal{C'}^-$;\\
 \item $ B,B'\in J\cap E$ the two points in $ \mathcal{C}^+\cap\mathcal{C}^- $;\\
\item  $ C\in J\cap H_{\infty}$ the point such that $\pi(C)=[0,1,0,0]$.\\
\end{itemize}

We now analyse the action of $f_{|J}$ at infinity.

\begin{proposition}\label{prout}
Setting $f= f|_J$, we have
\begin{itemize}
\item $f(B)=f^{-1}(B)=f(B')=f^{-1}(B')=C$;
\item $f(C)=\{  A^- ,  B , B' \} $ and $f^{-1} (C)=\{  A^+ ,B ,B' \} $;
\item $f(A^+)= A^-$ and $f^{-1} (A^-)= A^+$;
\item $f(A^-)=C$ and $f^{-1}(A^+)=C$.
\end{itemize}
\end{proposition}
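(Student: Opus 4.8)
The plan is to read off the image of each of the five points of $J\cap D_{\infty}$ under the two (multivalued) maps, by combining the curve-level description of the dynamics at infinity already obtained in Proposition~\ref{i} with a single new indeterminacy computation at $C$. Throughout I would use the involution $\tau$: since $\tau f\tau$ has the same behaviour at infinity as $f^{-1}$ and $\tau$ exchanges $p^{+}\leftrightarrow p^{-}$, $\mathcal{C}^{+}\leftrightarrow\mathcal{C}^{-}$, $\mathcal{C'}^{+}\leftrightarrow\mathcal{C'}^{-}$, $A^{+}\leftrightarrow A^{-}$ (while $B,B',C$ are defined symmetrically), it is enough to establish the four $f$-assertions; the four $f^{-1}$-assertions then follow by the same argument, or directly from the explicit formula (\ref{eq.maegawamoins}).

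First I would record what is essentially already proved. From Proposition~\ref{i} one has $f(\mathcal{C'}^{+})=\mathcal{C'}^{-}$, and the whole second curve $\mathcal{C}^{+}=E\cap(I_{f^{2}}\setminus I_{f})$ is contracted by $f$ onto $[0,1,0,0]$, i.e. onto $C$; crucially $\mathcal{C}^{+}\cap I_{f}=\emptyset$ by definition, so $f$ is single-valued everywhere on $\mathcal{C}^{+}$. The only genuinely new ingredient is the forward image of $C$ itself. Since $\pi(C)=[0,1,0,0]\in L'\subset I_{f}$, the point $C$ is an indeterminacy point of $f$; in the chart $Y\neq 0$ centred at $C$ the map reads $f=[\,s:ws:w+bs+cws+dus:s^{2}\,]$, and blowing up the origin (equivalently, letting the normal direction $[Y:T]\to[\alpha:\beta]$) shows that $f(C)$ is a curve lying in $E$ whose equation, in the chart $(Z\neq 0,\ \xi_{1}\neq 0)$ with $\omega_{1}=X/Z$, $\omega_{3}=T/Y$, is exactly $\omega_{3}-\omega_{1}-b\omega_{1}\omega_{3}=0$; that is, $f(C)=\mathcal{C}^{-}$. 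By the symmetry, $f^{-1}(C)=\mathcal{C}^{+}$.

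Next I would pin down the five points as intersections of these curves, using the defining equations of Proposition~\ref{i} in the chart $(Z\neq 0,\ \xi_{1}\neq 0)$. One finds $B,B'=\mathcal{C}^{+}\cap\mathcal{C}^{-}$; that $\mathcal{C'}^{+}$ is the exceptional fibre of $E$ over $p^{+}$ and $\mathcal{C'}^{-}$ the fibre over $p^{-}$; that the unique point of $J$ on $\mathcal{C'}^{+}$ is $\mathcal{C'}^{+}\cap\mathcal{C}^{-}$ and the one on $\mathcal{C'}^{-}$ is $\mathcal{C'}^{-}\cap\mathcal{C}^{+}$; and that $C=\widetilde{L'}\cap\widetilde{L''}$. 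Reading off the images is then mechanical. The points $B,B'$ lie on $\mathcal{C}^{+}\setminus I_{f}$, so $f(B)=f(B')=C$ by the contraction; likewise the point of $J$ on $\mathcal{C'}^{-}$ lies on $\mathcal{C}^{+}\setminus I_{f}$, so its $f$-image is $C$ (this can also be confirmed by a one-line limit in homogeneous coordinates). The point of $J$ on $\mathcal{C'}^{+}$ lies in $I_{f}$, so there $f$ is governed by $f(\mathcal{C'}^{+})=\mathcal{C'}^{-}$ and its image is the unique point of $J$ on $\mathcal{C'}^{-}$. Finally $f(C)=\mathcal{C}^{-}$ meets $J$ in $B,B'$ together with the unique further point $\mathcal{C'}^{+}\cap\mathcal{C}^{-}$.

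The delicate step is the treatment of the two points $A^{+},A^{-}$. Each sits at the crossing of an indeterminacy fibre $\mathcal{C'}^{\pm}$ with a second curve $\mathcal{C}^{\mp}$, and these two incident branches dictate different images: the contraction rule for $\mathcal{C}^{\mp}$ versus the blow-up rule for $\mathcal{C'}^{\pm}$. One must therefore check, separately for $f$ and for $f^{-1}$, whether the point belongs to $I_{f}$ (resp. $I_{f^{-1}}$) before applying the corresponding rule; it is precisely the asymmetry ``on $\mathcal{C'}^{+}$ but not in $I_{f^{-1}}$'' versus ``in $\mathcal{C}^{+}\setminus I_{f}$'' that makes one of them map to the other $A$ and the other map to $C$. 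The remaining difficulty is purely organisational: one must transport the normal-direction coordinate $\omega_{3}$ correctly between the charts $\xi_{1}\neq 0$ and $\xi_{2}\neq 0$ and match them with the chart $Y\neq 0$ used at $C$, and track carefully which of $A^{+},A^{-}$ is produced by each intersection, since this is exactly the place where a sign or a label is easiest to misplace.
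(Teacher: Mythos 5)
For the first, third and fourth bullets your route is essentially the paper's own: $B$, $B'$ and $A^-$ lie on $\mathcal{C}^+\setminus I_f$, which $f$ contracts to $C$, while $A^+\in\mathcal{C'}^+\subset I_f$ is handled through $f(\mathcal{C'}^+)=\mathcal{C'}^-$ together with $\mathcal{C'}^-\cap J=\{A^-\}$; this part is correct (it uses, as the paper tacitly does, that $J$ is $f^{\pm 1}$-invariant, so the cluster values of $f|_J$ at $A^+$ lie in $\overline{J}$). Your treatment of the second bullet is genuinely different: where the paper argues softly with sequences ($p_n\to B$ in $J$, $z_n:=f(p_n)\to C$, $f^{-1}(z_n)=p_n\to B$), you compute the blow-up of the indeterminacy point $C$ in the chart $Y\neq 0$ and identify its total transform as the curve $\omega_3-\omega_1-b\omega_1\omega_3=0$, i.e. $\mathcal{C}^-$. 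That computation is right, and it supplies precisely the half of the argument the paper only asserts (``there are no other possibilities''), namely the upper bound: the cluster values of $f|_J$ at $C$ lie in $\mathcal{C}^-\cap\overline{J}$.

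But follow your own computation to its end. You correctly identify $A^+=\mathcal{C'}^+\cap\mathcal{C}^-$ and $A^-=\mathcal{C'}^-\cap\mathcal{C}^+$, so $\mathcal{C}^-\cap\overline{J}=\{A^+,B,B'\}$, and what you have actually proved is $f(C)\subset\{A^+,B,B'\}$ --- not the printed $f(C)=\{A^-,B,B'\}$. You never confront this: your closing paragraph names the danger of misplacing labels but does not perform the check, so the proposal derives one statement while claiming to prove another. The mismatch is not a slip in your blow-up. Indeed the fourth bullet itself forces $A^+\in f(C)$: since $A^+\in\mathcal{C}^-$ is a regular point of $f^{-1}$ with $f^{-1}(A^+)=C$, any sequence $p_n\to A^+$ in $J$ gives $z_n:=f^{-1}(p_n)\in J$ with $z_n\to C$ and $f(z_n)=p_n\to A^+$ (this sequence argument is also the attainment direction missing from your write-up, and it is how the paper argues). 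Since moreover $A^-\notin\mathcal{C}^-$ (otherwise $A^-$ would lie in $\mathcal{C}^+\cap\mathcal{C}^-=\{B,B'\}$, contradicting that $I^+\cap I^-$ has five distinct points), the image of $C$ under $f|_J$ is $\{A^+,B,B'\}$ and that under $f^{-1}|_J$ is $\{A^-,B,B'\}$: the two sets in the second bullet are interchanged. This corrected version is what equation (\ref{two}) and the cycle $J'_{A^+}\rightarrow J'_{A^-}\rightarrow J'_C\rightarrow J'_{A^+}$ of Proposition \ref{pro.partition} actually require, and it is what the paper's own sequence argument yields once its ambiguous symbol $A'$ is unwound (the argument applies to $A^-$, for which $f(A^-)=C$, not to $A^+$). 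So a correct write-up must either prove the corrected statement and say explicitly that the printed bullet swaps $f(C)$ and $f^{-1}(C)$, or else read $f(C)$ as the fibre $\{x:\ f(x)=C\}$, in which case the second bullet is a tautological consequence of the other three and your blow-up of $C$ answers a different question. As it stands, the proposal does neither, and that unresolved discrepancy is a genuine gap.
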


\begin{proof}
Since $f(\mathcal{C}^+)=C$, the regular points (for $f$ but not for $f^2$) $B,B', A^-\in \mathcal{C}^+$ are mapped into $C$ by $f$. We also obtain in the same way $f^{-1}(B)=f^{-1}(B')=f^{-1}(A)=C$. Since $A^+\in \mathcal{C'}^+$, we have $ f(A^+)\in \mathcal{C'}^-\cap J$, i.e. $f(A^+)=A^-$. A similar argument yields $f^{-1}(A^-)=A^+$.

\smallskip

Choose a sequence $ (p_n) \in J^{\N}$ that converges to $B$. Then the sequence $ z_n:=f(p_n)\in \text{supp}\left(T_f\wedge T_{f^{-1}}\right)$ 
converges to $C$. Therefore $B\in f^{-1} (C)$, since $ (z_n)\in \text{supp}\left(T_f\wedge T_{f^{-1}}\right)$. We prove in the same way that $B',A'\in f^{-1} (C)$, and that there are no other possibilities. Thus $f^{-1} (C)=\{  A^+,  B ,  B' \} $, and similarly we find $f(C)=\{  A^- , B ,  B' \} $.\qed
\end{proof}



For two unbounded sets $S$, $S'\in\mathbb{C}^3$, we write $S\simeq S'$ if there exists $R\in\mathbb{R}^+$ such that $S\cap \{\parallel z\parallel > R\} = S'\cap \{\parallel z\parallel > R\}$.  This defines an equivalence relation, whose classes will be called {\bf{germs at infinity}}. Moreover, if we have a partition $S\cap \{\parallel z\parallel > R\} = S_1\sqcup S_2$ for some $R>0$, we will write $S \simeq S_1 \sqcup S_2$. 

Let $J_{A^+}$, $J_{A^-}$, $J_{C}$, $J_B$  be the germs at infinity of the intersections of $J$ with disjoint neighborhoods of $A^+$, $A^-$, $C$, $\{B, B'\}$, respectively.
 It follows for instance from the previous proposition that
 \begin{equation}
 f(J_{A^+})\simeq J_{A^-}\text{ and }  f^{-1} (J_{A^-})\simeq J_{A^+}.
 \end{equation}
  Similarly, Proposition \ref{prout} yields
 \begin{equation}
 \label{two}J_C\simeq f^{-1} (J_{A^+})\sqcup f^{-1} (J_B)\simeq f(J_{A^-})\sqcup f(J_B).
 \end{equation}
 
 We can now describe the dynamics on $J$ near infinity. Given two sets $A,B\subset X$, we write $A\rightarrow B$ for $f(A)\simeq B$. 
 
 \begin{proposition}\label{pro.partition}
There exists a partition of $J|_{\mathbb{C}^3}$ near infinity into nonempty sets $$\displaystyle J_{\mathbb{C}^3}\simeq J'_{A^+}\sqcup J'_{A^-}\sqcup J'_{C}\sqcup J'_B\sqcup J''_{A^+}\sqcup J''_{A^-}\sqcup J''_{C}\sqcup J''_B,$$where $J'_K$, $J''_K\subset J_K$ for $K\in \{A^+, A^-, C, {B,B'}\}$, such that the following hold
$$
\begin{array}{ccccccccccccc} J_{A^+}' &\rightarrow & J_{A^-}' & \rightarrow & J_{C}' &\rightarrow &  J_{A^+}', & & & & & &\\ 
 &  & &   &  & &  & & & & & & \\  J_{A^+}''  &\rightarrow & J_{A^-}'' & \rightarrow & J_{C}'' &\rightarrow &  J_{B}'' &\rightarrow & J_{C}'' & \rightarrow & J_{A^+}'' .\end{array}
 $$

Moreover, setting $J':= J'_{A^+}\sqcup J'_{A^-}\sqcup J'_{C}\sqcup J'_B$ and $J'':= J''_{A^+}\sqcup J''_{A^-}\sqcup J''_{C}\sqcup J''_B$, the restriction of $f$ to $J'$ or $J''$ extends continuously to infinity at $A^+$, $A^-$, $C$.
\end{proposition}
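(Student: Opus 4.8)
The plan is to reduce the statement to a finite piece of symbolic bookkeeping governed by how $f$ permutes the local branches of $J$ at the five cluster points, and then to check the continuous-extension claim chart by chart. The starting observation is that $J|_{\mathbb{C}^3}$ is genuinely $f$-invariant by (\ref{bof}) and that $f$ is a biholomorphism of $\mathbb{C}^3$; hence near infinity $f$ merely moves points of $J$ between disjoint neighborhoods of $A^+$, $A^-$, $C$ and $\{B,B'\}$, and the only data I need is the transition rule between the corresponding germs. This rule is already recorded in Proposition \ref{prout} and in the germ relations $f(J_{A^+})\simeq J_{A^-}$ and (\ref{two}): reading them as a directed graph on the alphabet $\{A^+,A^-,C,B\}$ produces the edges $A^+\to A^-$, $A^-\to C$, $B\to C$ together with $C\to A^+$ and $C\to B$. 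The essential feature is that $C$ is the unique vertex of in- and out-degree two, so all the branching of the dynamics is concentrated there.

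I would then construct the partition by splitting $J$ near $C$. Relation (\ref{two}) decomposes $J_C$ in two complementary ways: as $f^{-1}(J_{A^+})\sqcup f^{-1}(J_B)$ according to the outgoing direction ($A^+$ versus $B$), and as $f(J_{A^-})\sqcup f(J_B)$ according to the incoming direction ($A^-$ versus $B$). Intersecting these two decompositions sorts the local branches of $J$ at $C$ by their incoming/outgoing type; the branch of type $(A^-,A^+)$ I declare to be $J'_C$, and under pushforward and pullback by $f$ it closes into the period-three orbit $J'_{A^+}\to J'_{A^-}\to J'_C\to J'_{A^+}$ on setting $J'_{A^-}=f^{-1}(J'_C)$ and $J'_{A^+}=f(J'_C)$, while the branches of type $(A^-,B)$ and $(B,A^+)$ are collected into $J''_C$ and, propagated analogously through $\{B,B'\}$, generate the period-five orbit. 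Using the invariance of $J$ and the injectivity of $f$ on $\mathbb{C}^3$ one verifies that the eight resulting pieces are pairwise disjoint and account for all of $J$ clustering at the five points; distinguishing the two points $B$ and $B'$ inside $J_B$ provides the room needed to populate both $J'_B$ and $J''_B$. Nonemptiness is where the currents intervene: by Proposition \ref{cluster} the wedge $T_f\wedge T_{f^{-1}}$ has strictly positive Lelong numbers at every point of $I^+\cap I^-$, so $J$ really does cluster at each of the five points, and the branch count forced by the transition graph shows that none of the pieces collapses.

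For the final assertion I would argue directly in the charts of Section \ref{sec:1}. At $A^+$, $A^-$ and $C$ the map $f$ (resp.\ $f^{-1}$) is a priori indeterminate only through a single $0/0$ ratio in the normal forms (\ref{eq.prout}), (\ref{eq1}), (\ref{cavalavie?}); restricting to one of the itinerary branches selected above fixes the tangential direction along which $J$ approaches the point, so that this ratio acquires a finite limit and $f|_{J'}$ (resp.\ $f|_{J''}$) extends continuously there. The point $\{B,B'\}$ is deliberately excluded: lying in $\mathcal{C}^+\cap\mathcal{C}^-$, it is indeterminate for both $f^2$ and $f^{-2}$, and no single choice of incoming and outgoing branch removes the indeterminacy, which is exactly why the five-cycle is permitted to pass through $\{B,B'\}$ with no continuity claim there. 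I expect the main obstacle to be precisely this local analysis at $C$: one must show that the set-theoretic splitting of $J_C$ furnished by (\ref{two}) genuinely corresponds to distinct analytic branches of the \emph{support of a wedge of currents} (and not merely of an algebraic variety), that exactly the branch types $(A^-,A^+)$, $(A^-,B)$ and $(B,A^+)$ occur and no others, and that along each of them the indeterminate ratio converges — all of which has to be read off from the current-theoretic structure of $T_f\wedge T_{f^{-1}}$ near $C$ rather than from formal dynamics alone.
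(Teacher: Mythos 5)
Your construction is structurally the paper's own --- split $J_C$ by incoming/outgoing type via (\ref{two}), transport the pieces by $f$, and read off the $3$- and $5$-cycles --- but the two points you leave open are precisely the ones the paper has to prove, and your proposed substitutes do not hold up. First, the exclusion of a type-$(B,B)$ branch at $C$, i.e.\ the disjointness $f(J_B)\cap f^{-1}(J_B)\simeq\emptyset$ near $C$, is not something that ``has to be read off from the current-theoretic structure of $T_f\wedge T_{f^{-1}}$ rather than from formal dynamics alone'': the paper proves it by formal dynamics alone. Writing $J''_C:=f^{-1}(J_B)$ and $J'''_C:=f(J_B)$, it uses the surjectivity of $f|_J$ (which you never invoke) together with (\ref{two}) to get $f^2(J_{A^-})\simeq J_B\sqcup J_{A^+}$, hence $f^{-1}(J''_C)\subset J_{A^-}$ and, symmetrically, $f(J'''_C)\subset J_{A^+}$; a point of $J''_C\cap J'''_C$ would then have its image under $f$ in both of the disjoint germs $J_B$ and $J_{A^+}$, a contradiction. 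Deferring this step, as you do, leaves the partition itself unestablished, since your pieces at $C$ are only well defined once the four a priori branch types reduce to three.

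Second, your nonemptiness argument is wrong, not merely incomplete. You claim that the positive Lelong numbers of $T_f\wedge T_{f^{-1}}$ at the five points of $I^+\cap I^-$ (Proposition \ref{cluster}) plus ``the branch count forced by the transition graph'' show that no piece collapses. But the five-cycle $A^+\to A^-\to C\to B\to C\to A^+$ already visits every point of $I^+\cap I^-$ (it passes through $C$ twice and through both $B$ and $B'$), so a nonempty $J''$ by itself accounts for all of the clustering of $J$ at infinity; neither the graph nor Proposition \ref{cluster} rules out $J'=\emptyset$. Clustering at $B$ does force $J''_B\neq\emptyset$ --- that is exactly the paper's argument for $J''$ --- but for $J'$ the paper must exhibit actual orbits: it combines the explicit estimate of Remark \ref{epsilon}, namely $J_{A^-}\cap\{|x|<\varepsilon,\ |y|<\varepsilon^2\}\neq\emptyset$, with a direct computation showing that such points are mapped by $f$ into a neighborhood of $A^+$, and even then only under the parameter restrictions $|b|<1$ and $|c|$ sufficiently small. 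This quantitative step cannot be replaced by combinatorics. (Your chart-by-chart treatment of the continuity claim, by contrast, is only heavier than necessary: once the germ relations $f(J'_{A^+})\simeq J'_{A^-}$, etc., are in place, continuity of $f|_{J'}$ and $f|_{J''}$ at $A^+$, $A^-$, $C$ is immediate.)
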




Thus the points in $J$ near infinity are "weakly periodic with period $3$ or $5$", in the sense that each of them is mapped near itself by $f^3$ or $f^5$. 
We summarize the above proposition in Figure 3.

\begin{figure}
\begin{center}
\includegraphics[width=12cm, height=8cm]{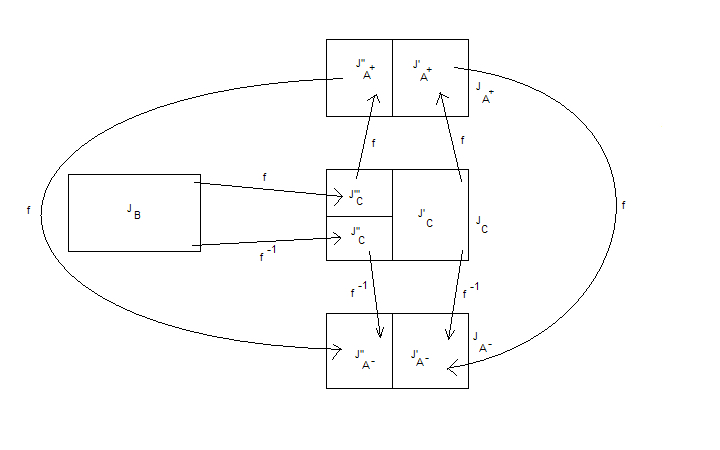}
\end{center}
\caption{Dynamics near infinity on the support of $T_f\wedge T_{f^{-1}}$.}
\end{figure}

\begin{proof}
We set 
$
J'''_{C}:=f(J_B),\text{ }J''_C:=f^{-1}(J_B),\text{ }J'_{C}=J_C\setminus (J''_C\cup J'''_C).
$

We will first prove that these three sets form a partition of $J_C$.
Note that Proposition \ref{prout} implies $ f(J_C\setminus J''_C)\subset J_{A^+}$. 
 It follows from (\ref{two}) that
\begin{equation}\label{*}
f(J_C\setminus J''_C)\simeq J_{A^+}.
\end{equation}
Similarly,
$
f^{-1}(J_C\setminus J'''_C)\simeq J_{A^-}.
$

Now, by Proposition \ref{prout} we have $ f^2(J_{A^-})\subset J_B\sqcup f(J_C\setminus J''_C)$. 
Using (\ref{*}) and the fact that $f_{|J}:J \rightarrow J$ is surjective, we obtain 
$
 f^2(J_{A^-})\simeq J_B\sqcup J_{A^+}.
 $
 
Rewriting this equation as $ J_{A^-}\simeq f^{-1} (J''_C)\sqcup f^{-2}(J_{A^+})$ yields 
$$
 f^{-1} (J''_C)\subset J_{A^-}.
 $$
This equation, combined with $f (J'''_C)\subset J_{A^+}$ obtained in the same way, shows that $J'''_C\cap J''_C=\emptyset$. So we have proved that $J_C\simeq J'_C\sqcup J''_C\sqcup J'''_C$.
\smallskip

Set $J''_{A^+}:=f(J'''_C)$, $J'_{A^+}=f(J'_C)$, $J''_{A^-}:=f^{-1} (J'''_C)$, $J'_{A^-}=f^{-1} (J'_C)$. It follows from the previous paragraph that $J_{A^+}\simeq J'_{A^+}\sqcup J''_{A^+}$ and $J_{A^-}\simeq J'_{A^-}\sqcup J''_{A^-}$. Indeed, the sets $J'_{A^+}$ and $J''_{A^+}$ are disjoint since $J'_C$ and $J'''_C$ are, and the surjectivity of $f_{|J}$ shows that $J'_{A^+}\sqcup J''_{A^+}$ is a partition of $J_{A^+}$.

\smallskip

Let us now prove that $f^3(J_B)\simeq f^{-2}(J_B)$ (this means informally that the previous diagram is "commutative" in a weak sense). We have \begin{equation}\label{sqcup}
\begin{array}{rll} J_{A^+} & = & \left(f^3(J_{A^+})\cap J_{A^+} \right)\sqcup \left( f^3\left(J_C''\right)\cap J_{A^+}\right)\\
   &   &   \\
   &   = &   \left(f^3(J_{A^+})\cap J_{A^+} \right)\sqcup\left(J_{A^+}''\cap J_{A^+}\right)\end{array}.
   \end{equation}
Indeed, a point $x\in J_{A^+}$ can only come from $J_{A^+}$ or $J_c''$ by $f^3$, according to Proposition \ref{prout}. It follows that 
   $
    f(J_{A^+}'')\subset J_{A^-}''.
    $
    
Similarly, one gets $ f^{-1} (J_{A^-}'')\supset J_{A^+}''$ and $ f(J_{A^+}'')\subset J_{A^-}''$.
\smallskip

\noindent{\bf The set $J''$ is nonempty, and the set $J'$ is nonempty for $|b|<1$ and  $|c|$ sufficiently small.} By Proposition \ref{cluster}, the set $J=J'\sqcup J''$ reaches infinity at $B$. Since we have seen that $J'$ does not reach infinity at $B$ but $J''$ does, the set $J''_B$ is nonempty. Therefore, its images by $f$ and $f^2$ are nonempty also. Thus $J''$ is nonempty.

For proving that $J'$ is nonempty, it is sufficient to prove that there exists a sufficiently small neighborhood of $A^-$ in $X$ that is mapped by $f^2$ into a neighborhood of $A^+$. It will be shown in Remark \ref{epsilon} below that $J_{A^-}\cap  \{(x,y,z)\in \mathbb{C}^3, |x|<\varepsilon, \text{ }|y|<\varepsilon^2 \}\neq \emptyset$. 
 If $|c|<\varepsilon^2$ and $|b|<1$, it is straightforward that $f(x_0,y_0,z_0)\in V_{A^+}$.\qed

\end{proof}



\begin{corollary}\label{f15}
$f^{15}|_{J}:J\rightarrow J$ extends continuously at infinity. Moreover, the points at infinity are fixed by $f^{30}|_{J}:J\rightarrow J$.
\end{corollary}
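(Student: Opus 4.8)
The plan is to read off the dynamics at infinity from the partition $J \simeq J' \sqcup J''$ of Proposition~\ref{pro.partition}, on each piece of which $f$ cyclically permutes the relevant points of $J \cap D_\infty$ and extends continuously. On $J'$ the cycle $A^+ \to A^- \to C \to A^+$ has length $3$ and involves only $A^+, A^-, C$ (recall $J'$ does not reach infinity at $B$), and $f|_{J'}$ extends continuously at these three points by Proposition~\ref{pro.partition}. On $J''$ the cycle $A^+ \to A^- \to C \to B \to C \to A^+$ has length $5$; here $f|_{J''}$ extends continuously at $A^+, A^-, C$ by the same proposition, and also at $B, B'$, since $B, B' \in \mathcal{C}^+ \cap \mathcal{C}^-$ while $\mathcal{C}^+ = E \cap (I_{f^2} \setminus I_f)$ lies outside $I_f$, so $f$ is holomorphic there.

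First I would show that $f^{15}|_J$ extends continuously at infinity. Since $f|_{J'}$ is continuous at every point of its length-$3$ cycle, so is each of its powers, and $f^{15}|_{J'} = (f^3|_{J'})^5$ returns the germ at each of $A^+, A^-, C$ to itself; likewise $f^{15}|_{J''} = (f^5|_{J''})^3$ returns every germ of $J''$ to its starting point, because $15 = \mathrm{lcm}(3,5)$. The delicate point is $C$: there the prime germ and the double-prime germs all cluster, the length-$5$ cycle visits $C$ twice, and $f$ is indeterminate with $f(C) = \{A^-, B, B'\}$ (Proposition~\ref{prout}). A direct congruence count shows that the images under $f^n$ of these several germ-branches land on a single point of $D_\infty$ precisely when $3 \mid n$ and $5 \mid n$; hence $n = 15$ is the least exponent realigning them, so $f^{15}|_J$ is continuous at $C$, and therefore at every point of $J \cap D_\infty$.

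Next I would check that $f^{30}|_J$ fixes the five points at infinity. By the synchronisation above, $f^{15}$ already fixes $A^+, A^-, C$, each of which lies on both cycles, whose lengths divide $15$. For the remaining two points I would determine how the length-$5$ return map $f^5|_{J''}$ acts on the pair $\{B, B'\}$, the two branches of $\mathcal{C}^+ \cap \mathcal{C}^-$ (Proposition~\ref{i}). Tracking, via $f(B) = f(B') = C$ and $f(C) = \{A^-, B, B'\}$, which branch of $C$ leads to which of $B, B'$, I expect $f^5|_{J''}$---which fixes the germ at the pair $\{B, B'\}$---to exchange the two points, so that on points the double-prime orbit has period $10$. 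Then $f^{15}$, being an odd multiple of $f^5$, realises the transposition $(B\,B')$, whereas $f^{30} = (f^{15})^2$ fixes both $B$ and $B'$. Hence $f^{30}|_J$ fixes all of $A^+, A^-, C, B, B'$.

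The main obstacle is the analysis at $C$. One must resolve the indeterminacy of $f$ there along $J$, using Proposition~\ref{prout} to decide which germ-branch of $C$ is carried to $A^+$, to $B$, and to $B'$, and then verify that $f^5|_{J''}$ genuinely swaps $B$ and $B'$. Both reduce to bookkeeping the germ-branches clustering at $C$ together with the two branches of $\mathcal{C}^+ \cap \mathcal{C}^-$; once this is settled, the exponents $15$ and $30$ emerge as $\mathrm{lcm}(3,5)$ and $2\,\mathrm{lcm}(3,5)$.
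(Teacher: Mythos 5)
Your proposal follows essentially the same route as the paper's proof: both rest on the two germ-cycles of lengths $3$ and $5$ from Proposition~\ref{pro.partition}, the observation that $15=\mathrm{lcm}(3,5)$ re-synchronises all germ-branches, and a finer bookkeeping of the two branches of $J_B$ to handle the points that Proposition~\ref{pro.partition} alone does not settle. Two points deserve correction, though. First, you locate the delicacy at $C$, but continuity of $f^{15}|_J$ at $C$ is in fact immediate from the cycle diagram: the three branches of $J$ clustering at $C$ have periods $3$, $5$, $5$ under $f$, all dividing $15$, so each returns to itself. The genuinely missing point is at $B$ and $B'$: Proposition~\ref{pro.partition} only treats $J_B$ as a single germ at the \emph{pair} $\{B,B'\}$, so one must still show that $f^{5}$ carries the branch of $J_B$ clustering at $B$ \emph{wholly} to a branch clustering at a single one of the two points, rather than splitting or spreading it over both. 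Your observation that $f$ itself is holomorphic at $B,B'$ (since $\mathcal{C}^+\cap\mathcal{C}^-\subset I_{f^2}\setminus I_f$) controls only the first iterate and does not give this; the paper's proof supplies exactly this refinement, by splitting $J_B\simeq J_B'\sqcup J_B''$ into the branches at $B$ and $B'$ and showing $J_C''\simeq f^{-1}(J_B')\sqcup f^{-1}(J_B'')$, $J_C'''\simeq f(J_B')\sqcup f(J_B'')$, with $f^5|_{J_B'}$ and $f^5|_{J_B''}$ extending continuously at infinity. You do gesture at this bookkeeping in your final paragraph, so the idea is present, at roughly the same level of detail as the paper.

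Second, your argument for the $f^{30}$ statement is routed through the claim that $f^5|_{J''}$ \emph{genuinely} exchanges $B$ and $B'$ (a period-$10$ point orbit). You do not prove this, and neither does the paper: the paper only asserts that $f^{15}|_J$ fixes $A^+,A^-,C$ and \emph{possibly} permutes $B$ and $B'$ (``eventually permutes'' in the Gallic sense). Nothing more is needed: once $f^{15}|_J$ is continuous at infinity, it induces some permutation of the two-point set $\{B,B'\}$, and whichever permutation that is, its square $f^{30}=(f^{15})^2$ fixes both points. You should decouple your conclusion from the unverified swap claim, which is both unnecessary and potentially unprovable by the methods at hand; as written, if the swap fails to occur your stated argument rests on a false premise even though the conclusion survives trivially.
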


\begin{proof}
Proposition \ref{pro.partition} shows that $f^{15}|_{J}:J\rightarrow J$ extends continuously at infinity, except perhaps at $B$ and $B'$. Let $J_B\simeq J_{B}'\sqcup J_{B}''$ denote the partition of $J_B$ into the two branches that contain the two points $B$ and $B'$. We can prove by the same type of arguments as those used in the proof of the previous proposition that $J_{C}''\simeq f^{-1}(J_B')\sqcup f^{-1}(J_{B}'')$ and $J_{C}'''\simeq f(J_B')\sqcup f(J_{B}'')$, and that $f^{5}|_{J_B'}$ and $f^{5}|_{J_B''}$ extend continuously at infinity.
\smallskip

It follows from Proposition \ref{pro.partition} that $f^{15}|_{J}:J\rightarrow J$ fixes the points $A^+,A^-,C$, and eventually permutes the points $B$ and $B'$. Thus all these points are fixed by $f^{30}|_{J}:J\rightarrow J$.\qed
\end{proof}


\begin{remark}\label{serviraplutar}
We deduce from Proposition \ref{julia} that $T_f\wedge T_f=T_{f^{-1}} \wedge T_{f^{-1}}=0$ in $\mathbb{C}^3$, since 
$T_f\wedge T_f=dd^c(G^+ T_f)$ and $G^+=0$ on $\text{supp}(T_f)\cap \mathbb{C}^3$. 
Thus neither $T_f\wedge T_f$ nor $T_{f^{-1}}\wedge T_{f^{-1}}$ are of any interest. \end{remark}

\section{A canonical invariant measure}

We now go back to dynamics. We study the same family as before, quadratic polynomial automorphims $f:\C^3\rightarrow \C^3$ which are not cohomologically hyperbolic.
We would like to construct a canonical invariant probability measure for them. Recall that
$$
f(x,y,z)=(y,z,yz+by+cz+dx+e), \; \;b,c,d\in\mathbb{C}, |d|>1
$$
\noindent induces an algebraically stable automorphism of $X$, the blow-up of $\mathbb{P}^3$ along a suitable projective line, such that $f\moins$ is also algebraically stable with
$$
\dis\lambda_1(f)=\lambda_1(f\moins)=\frac{1+\sqrt{5}}{2}>\lambda_3(f)=1.
$$
\noindent We have constructed the canonical invariant currents $T_f,T_{f\moins}$ and $T:=\frac{\sigma}{3+\sigma}T_f\wedge T_{f\moins}$. In this section we construct a continuous function $\phi_{\infty}$ defined on 
the support of the current $T$ with suitable invariance properties and such that it is $T$-plurisubharmonic. (Recall that a $T$-plurisubharmonic function, associed with a closed positive current $T$, is a function $u:supp(T)\rightarrow \mathbb{R}\cup\{-\infty\}$, $u\in L^1(|T|)$, that is the limit on $supp(T)$ of a sequence of plurisubharmonic functions defined on some neigborhood of $supp(T)$, decreasing on $supp(T)$.)

We then study the invariant measure $\mu=\dis\left(T_f\wedge T_{f\moins}\wedge  dd^c\phi_{\infty}\right){|_{\mathbb{C}^3}}$. We prove that its support is compact and included in the set of points with bounded forward orbits.


We suppose in this section that $|d|>1$ in Equation (\ref{maegawa}). We set 
$$ 
T:=\frac{\sigma}{3+\sigma}T_f\wedge T_{f^{-1}}.
$$ 
Recall that $J=\text{supp}(T)$. We can decompose $J=K\sqcup W$, where:
 
$$
\begin{array}{rl}
 K        &     \dis:=\{\text{the points of }J\text{ with a bounded forward orbit}\},\\
          &                                                                         \\
W         &                              :=J\setminus K.  \\
\end{array}
$$
Note that $K$ is compact. Indeed, Lemma \ref{below} below implies that $K$ is bounded, and the same lemma implies that $W$ is open in $\mathbb{C}^3$ as the basin of attraction of infinity on $J$. The set $W$ can be decomposed again into two "crossed" partitions:
\begin{lemma}\label{W}
We can decompose $W=W'\sqcup W''$ in $\mathbb{C}^3$ with
 $$W':=\{z\in W: f^{3n}(z)\text{ converges towards a point at infinity when }n\rightarrow +\infty\}$$
 $$W'':=\{z\in W: f^{5n}(z)\text{ converges towards a point at infinity when }n\rightarrow +\infty\}.$$
 $W'$ and $W''$ are unbounded sets invariant by $f^3$ and $f^5$ respectively.
 \smallskip
 
\noindent We can also decompose 
 $$
 W:=W_{A^+}\sqcup W_{A^-}\sqcup W_{C}\sqcup W_B\sqcup W_{B'},
 $$
where $W_{A^+},W_{A^-},W_{C},W_B,W_{B'}$ are the connected components of $W$ in $X$ that reach the points $A^+$, $A^-$, $C$, $B$, $B'$ respectively at infinity. These five sets are invariant by $f^{15}$.
 \end{lemma}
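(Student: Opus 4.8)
The plan is to read off both decompositions from the description of the dynamics near infinity obtained in Proposition~\ref{pro.partition} and Corollary~\ref{f15}, together with the fact (Lemma~\ref{below}) that every point of $W$ has a forward orbit escaping to infinity, and the fact (Proposition~\ref{cluster}) that $J=\text{supp}(T_f\wedge T_{f^{-1}})$ accumulates on $D_{\infty}$ exactly at the five points $A^+,A^-,C,B,B'$. I will also use that $J$, hence $W=J\setminus K$, is $f$-invariant by (\ref{bof}), and that $f^{15}|_J$ extends continuously to these five points, fixing $A^+,A^-,C$.

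First I would prove $W=W'\sqcup W''$. Let $z\in W$. By Lemma~\ref{below} the orbit escapes to infinity, so for $n$ large $f^n(z)$ lies in the germ region of Proposition~\ref{pro.partition}, where $J\simeq J'\sqcup J''$; since the two diagrams there show $J'$ and $J''$ to be forward invariant ($f(J')\subset J'$ and $f(J'')\subset J''$ near infinity), the orbit is eventually trapped in exactly one of them. If it is trapped in $J'$, the period-$3$ cycle $J'_{A^+}\to J'_{A^-}\to J'_{C}\to J'_{A^+}$ keeps $f^{3n}(z)$ in a fixed germ accumulating only at the corresponding point of $\{A^+,A^-,C\}$; as the orbit escapes, $f^{3n}(z)$ converges to that point and $z\in W'$. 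Symmetrically, trapping in $J''$ gives $z\in W''$. Disjointness is forced by the coprimality of the two periods: if $z\in W'$ then $f^{5n}(z)$ runs through all residues $5n\bmod 3$ and accumulates at the three distinct points $A^+,A^-,C$, so it cannot converge and $z\notin W''$; interchanging $3$ and $5$ gives the reverse exclusion. Invariance is immediate from the limit description, since $f^{3n}(f^{\pm3}(z))=f^{3(n\pm1)}(z)\to P$ yields $f^{3}(W')=W'$, and likewise $f^{5}(W'')=W''$; both sets are unbounded because they meet every neighborhood of their limit points at infinity, their nonemptiness following from the last part of Proposition~\ref{pro.partition}.

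Next I would establish the decomposition into five components. Viewing $W\subset\mathbb{C}^3\subset X$, its closure meets $D_{\infty}$ only in $\{A^+,A^-,C,B,B'\}$ by Proposition~\ref{cluster}, and near each of these points the germ analysis of Proposition~\ref{pro.partition} and of the proof of Corollary~\ref{f15} (where $J_B\simeq J_B'\sqcup J_B''$ separates the branches through $B$ and $B'$) exhibits a single connected germ of $W$. I define $W_P$ to be the connected component of $W$ in $X$ whose closure contains $P$, for $P\in\{A^+,A^-,C,B,B'\}$. Since every point of $W$ escapes to infinity, its component reaches $D_{\infty}$, necessarily at one of the five points; and two distinct points, having disjoint neighborhoods in which the germs of $W$ are separated, cannot lie in the closure of the same component. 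This gives the partition $W=W_{A^+}\sqcup W_{A^-}\sqcup W_{C}\sqcup W_{B}\sqcup W_{B'}$, each piece unbounded because it reaches infinity. For invariance, recall from Corollary~\ref{f15} that $f^{15}|_J$ extends to a homeomorphism near infinity fixing $A^+,A^-,C$ and preserving $\{B,B'\}$. As $f^{15}(W)=W$ and $f^{15}$ is continuous up to infinity, $f^{15}(W_P)$ is a connected subset of $W$ whose closure contains $f^{15}(P)=P$; by uniqueness of the component reaching $P$ we conclude $f^{15}(W_P)=W_P$ for $P\in\{A^+,A^-,C\}$, while $f^{15}$ fixes or exchanges $W_B,W_{B'}$ according to its action on $\{B,B'\}$.

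The main obstacle is the topological input in the second part: that $W$ has \emph{exactly} five connected components, one reaching each point, i.e.\ both the uniqueness of the component reaching a given point and the fact that these components exhaust $W$. This requires knowing that $J$ is locally connected near each of the five points and that the corresponding germs are genuinely separated, so that no component can approach two different points at infinity and no bounded component of $W$ can occur; controlling this from the local normal forms of Propositions~\ref{i} and~\ref{pro.partition} is the delicate step. A secondary point to settle is the $B\leftrightarrow B'$ ambiguity: Corollary~\ref{f15} only guarantees that $f^{15}$ permutes $B$ and $B'$, so the individual $f^{15}$-invariance of $W_B$ and $W_{B'}$ claimed here holds precisely when $f^{15}$ fixes each of $B,B'$, and otherwise only under $f^{30}$.
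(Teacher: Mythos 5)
Your first decomposition ($W=W'\sqcup W''$) is correct and follows essentially the paper's route: both arguments rest on Proposition \ref{pro.partition} and on the coprimality of the periods $3$ and $5$. Your disjointness argument (if $f^{3n}(z)$ converges, then $f^{5n}(z)$ visits all three germs of the $3$-cycle, hence accumulates at three distinct points and cannot converge) is a small variant of the paper's, which instead shows that $W'\cap W''$ would be stable under $f$ itself, unbounded (else it would sit in $K$), hence reaching infinity, contradicting the cyclic permutation of the germs. Both are valid; yours is arguably more direct, but note that it uses the continuity of the extension of $f$ restricted to $J'$ (resp.\ $J''$) at the points at infinity, which is precisely the last assertion of Proposition \ref{pro.partition}.

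The second decomposition is where you diverge from the paper, and where the gap you yourself flag is real. Defining $W_P$ as the connected component of $W$ whose closure contains $P$ obliges you to prove (i) that no component can reach two of the five points and (ii) that no bounded component exists; your claim that two points with disjoint neighborhoods, in which the germs of $W$ are separated, cannot lie in the closure of the same component is not valid as stated: a connected set can leave a neighborhood of $A^+$, pass through a bounded region, and re-enter a neighborhood of $C$, so separation of the germs at infinity does not by itself prevent a single component from accumulating at both points. The paper avoids this issue entirely by taking the five pieces to be defined dynamically rather than topologically, namely $W_P:=\{x\in W:\ f^{15n}(x)\rightarrow P\}$, using the decomposition of Proposition \ref{pro.partition}: disjointness is then just uniqueness of limits (this is exactly the paper's "if $W_{A^+}\cap W_C\neq\emptyset$ then $\lim f^{15n}(x)=C$, a contradiction"), exhaustion follows because every point of $W$ is eventually trapped in a single germ $J_K$, each germ is preserved by $f^{15}$, and infinity is attracting there by Lemma \ref{below}, so the $f^{15}$-orbit converges to the corresponding point; $f^{15}$-invariance is immediate from the defining property. (To be fair, the paper's proof, like yours, leaves the identification of these dynamically defined pieces with genuine connected components implicit.) Your final caveat about $B$ and $B'$ is well taken: Corollary \ref{f15} only guarantees that $f^{15}$ permutes $\{B,B'\}$, so individual invariance of $W_B$ and $W_{B'}$ is strictly ensured only for $f^{30}$; the paper's proof is silent on this point as well.
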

 
\begin{proof}
A glance at Proposition \ref{pro.partition} shows that $W=W'\cup W''$, with the notations of the statement. It remains to prove that $W'\cap W''=\emptyset$. The set $W'\cap W''$ is stable by $f^3$ and $f^5$, therefore it is periodic with period $1$, i.e. it is stable by $f$. This set $W'\cap W''$ is not bounded, otherwise $W'\cap W''\subset K$, which is not possible by definition of $W$ and $K$. Thus $W'\cap W''$ reaches infinity. Then Proposition \ref{pro.partition} prevents $W'\cap W''$ from being fixed under the action of $f$, a contradiction.\\
Now consider the decomposition $W=W_{A^+} \cup W_{A^-}\cup W_{C}\cup W_B\cup W_{B'}$ given by Proposition \ref{pro.partition}. If $x\in W_{A^+}$, then $f^{15n}(x)\rightarrow A^+$ when $n\rightarrow +\infty$. If $W_{A^+}\cap W_C\neq \emptyset $, then we would also have $\dis\lim_{n\rightarrow +\infty} f^{15n}(x) =C $, which is a contradiction. We prove in the same way that the previous decomposition of $W$ is a genuine partition.\qed
\end{proof}

With the notations of Proposition \ref{pro.partition}, we have $J'_{A^+}, J'_{A^-}, J'_C\subset W'''$ and $J''_{A^+}, J''_{A^-}, J''_C\subset W''$. Our goal is to construct a function $\phi_{\infty}:J\rightarrow \mathbb{R}$ with suitable invariant properties, such that the current $T\wedge dd^c \phi_{\infty}$ makes sense and yields an invariant positive measure.

\subsection{Growth control and invariant measure}

We will now construct a $T$-plurisubharmonic function $\phi_{\infty}$ on $J$ with invariant properties. This will allow us to construct an invariant probability measure.
We set $|T|:=T\wedge \omega$
for a  K\"ahler form $\omega$ and denote by $\parallel \cdot\parallel$ the max norm, and\linebreak
$\parallel \cdot \parallel_+:=\max (\parallel \cdot \parallel , 1)$.

\smallskip

We summarize the construction of $\phi_{\infty}$ and of the invariant measure in the following theorem:

\begin{theorem}\label{measure}
Suppose $|d|>1$. There exists a $T$-plurisubharmonic function $\phi_{\infty}:J\rightarrow \mathbb{R}$ which satisfies the following properties:
\begin{enumerate}
\item The function $\phi_{\infty}$ satisfies $\phi_{\infty}\circ f^{3}=\phi_{\infty}+\log |d|$ on $W'$;
\item $\phi_{\infty}=\log^+\parallel z\parallel+o(1)$ as $\parallel z \parallel \rightarrow +\infty$;
\item $\left(\phi_{\infty}=-\infty\right)=K$ and $|T|(K)=0$;
\item $e^{\phi_{\infty}}$ is continuous on $J$.
\end{enumerate}
The measure $\mu:=T\wedge dd^c\left(\phi_{\infty}\right)|_{\mathbb{C}^3}$ is a well-defined $f^{3}$-invariant positive measure with compact support, such that
$
\text{supp} (\mu)\subset(\phi_{\infty}=-\infty).
$
\end{theorem}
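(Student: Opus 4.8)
The plan is to realize $\phi_\infty$ as an \emph{additive} escape-rate (Green-type) function adapted to the weakly periodic dynamics at infinity furnished by Proposition~\ref{pro.partition} and Lemma~\ref{W}. On the period-$3$ set $W'$ I would set
$$
\phi_\infty(z):=\lim_{n\to+\infty}\Big(\log^+\|f^{3n}(z)\|-n\log|d|\Big),
$$
on the period-$5$ set $W''$ the analogous limit built from $f^{5}$ with its own cycle normalization, and $\phi_\infty:=-\infty$ on $K$. The additive constant $\log|d|$ is not the jacobian ($d$ per step) but the one–dimensional rate of expansion of the escaping coordinate over one period-$3$ cycle $A^+\to A^-\to C\to A^+$, which the explicit charts of Section~\ref{sec:1} should show equals $|d|$ in modulus. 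With this definition properties (1) and (2) are essentially built in: the functional equation $\phi_\infty\circ f^{3}=\phi_\infty+\log|d|$ on $W'$ is the telescoping identity for the defining limit (and $W'$ is $f^3$-invariant by Lemma~\ref{W}), while the $n=0$ term gives $\phi_\infty=\log^+\|z\|+o(1)$ near infinity.

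The heart of the matter — and the step I expect to be the \textbf{main obstacle} — is the quantitative growth control that makes these limits converge and yields the continuity of $e^{\phi_\infty}$. Using the charts near $A^+,A^-,C,B,B'$ and the filtration of Proposition~\ref{pro.partition}, I would establish an estimate of the form
$$
\log^+\|f^{3}(w)\|=\log^+\|w\|+\log|d|+O\big(\|w\|^{-\eta}\big)
$$
for $w$ escaping along $W'$, with an analogue for $f^5$ on $W''$. Summability of the errors along an orbit then gives uniform convergence of the partial sums on $\{\|z\|>R\}$, hence that $\phi_\infty$ is finite and continuous on $W$ and that $e^{\phi_\infty}$ extends continuously by $0$ across $K$ (property (4)). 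The same estimates, together with pluriharmonicity of the approximants along the escaping directions (compare Proposition~\ref{julia} for $G^+$), should show $\phi_\infty$ is pluriharmonic on the open set $W$. Two technical points live here: controlling the error uniformly up to the regular points at infinity where $f$ extends continuously on $J'$ and $J''$, and producing an honestly \emph{decreasing} sequence of psh functions converging to $\phi_\infty$ on $J$ (by correcting the partial sums with the summable error, or by Demailly regularization), the uniform convergence guaranteeing the limit is not $\equiv-\infty$.

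Property (3) is where the hypothesis $|d|>1$ is decisive. If $z\in K$ its forward orbit is bounded, so $\log^+\|f^{3n}(z)\|$ stays bounded while $n\log|d|\to+\infty$, forcing $\phi_\infty(z)=-\infty$; conversely convergence on $W$ keeps $\phi_\infty$ finite there, so $(\phi_\infty=-\infty)=K$. The decreasing psh approximants exhibit $K$ as the $-\infty$-locus of a psh function, hence $K$ is pluripolar (and compact by the growth estimates, cf. Lemma~\ref{below}). Since $T=\tfrac{\sigma}{3+\sigma}\,T_f\wedge T_{f^{-1}}$ has continuous local potentials $G^+,G^-$ in $\mathbb{C}^3$ (Proposition~\ref{julia}), the measure $|T|=T\wedge\omega$ puts no mass on pluripolar sets by Bedford--Taylor theory, so $|T|(K)=0$. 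In particular $\phi_\infty\in L^1(|T|)$ and, being a decreasing limit of psh functions on neighborhoods of $J$, it is $T$-plurisubharmonic.

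Finally, once $\phi_\infty$ is $T$-psh with $e^{\phi_\infty}$ continuous, $\mu=T\wedge dd^c\phi_\infty|_{\mathbb{C}^3}$ is a well-defined positive measure, obtained as the limit of the positive measures $T\wedge dd^c\phi_n$, the continuity of $e^{\phi_\infty}$ allowing passage to the limit despite the $-\infty$ values (generalized Monge--Ampère, Bedford--Taylor/Demailly). Pluriharmonicity of $\phi_\infty$ on $W$ gives $dd^c\phi_\infty=0$ there, so $\mu|_W=0$ and $\text{supp}(\mu)\subset J\setminus W=K$, which is compact and contained in $(\phi_\infty=-\infty)$. For $f^{3}$-invariance I would use $f^*(T_f\wedge T_{f^{-1}})=T_f\wedge T_{f^{-1}}$ in $\mathbb{C}^3$ (equation~(\ref{bof})), whence $(f^{3})^*\mu=T\wedge dd^c(\phi_\infty\circ f^{3})$; since $\phi_\infty\circ f^{3}-\phi_\infty$ is constant on $W'$ and (being a difference of pluriharmonic functions) pluriharmonic on $W$, its $dd^c$ vanishes on $W$, giving $(f^{3})^*\mu=\mu$ away from $K$. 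Reconciling the functional equation at the singular locus $K$, where both sides are $-\infty$, is the \emph{second point requiring care}: I would close this gap using the continuity of $e^{\phi_\infty}$ and the relation $e^{\phi_\infty\circ f^{3}}=|d|\,e^{\phi_\infty}$, extended from $W'$ across $K$ by continuity, so that no spurious mass is created on $K$ in the limit $T\wedge dd^c\phi_n\to\mu$.
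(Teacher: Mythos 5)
Your construction of $\phi_{\infty}$ itself follows the same architecture as the paper's proof: the escape-rate limit $\lim_n\left(\log^+\| f^{3n}\|-n\log|d|\right)$ on $W'$, the one-step growth estimate with summable errors (this is exactly Lemma \ref{below}), the telescoping functional equation, the asymptotics $\phi_{\infty}=\log^+\|z\|+o(1)$, the continuity of $e^{\phi_{\infty}}$, and the identification $(\phi_{\infty}=-\infty)=K$ all match. The genuine gaps are in the measure-theoretic half of the statement, and both concern the same missing estimate.

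First, your route to $|T|(K)=0$ fails. You claim the decreasing approximants ``exhibit $K$ as the $-\infty$-locus of a psh function, hence $K$ is pluripolar''. The raw functions $\psi_n=\log^+\| f^{3n}\|-n\log|d|$ are indeed psh on all of $\mathbb{C}^3$, but they are only uniformly controlled on $J$: on the open basin $\mathcal{B}_{\infty}^+=(G^+>0)$ one has $\log^+\| f^{3n}\|\geq c\,\sigma^{3n}G^+ + O(n)$, so $\sup_{i\geq n}\psi_i\equiv+\infty$ there. Since $T_f=dd^cG^+$ in $\mathbb{C}^3$ with $G^+$ pluriharmonic on $\mathcal{B}_{\infty}^+$ and vanishing outside it (Proposition \ref{julia}), we have $J\subset\text{supp}(dd^cG^+)\subset\partial\mathcal{B}_{\infty}^+$; in particular every point of $K$ is a limit of points where the tails are $+\infty$, so any usc regularization taken on an open neighborhood of $K$ equals $+\infty$, not $-\infty$, on $K$. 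No classical psh function with polar set $K$ comes out of this construction, and classical pluripolarity of $K$ is established nowhere (the paper does not prove it in this theorem; it proves only $|T|(K)=0$, by a different mechanism). Second, even granting $|T|(K)=0$, your ``in particular $\phi_{\infty}\in L^1(|T|)$'' is a non sequitur: $\phi_{\infty}\rightarrow-\infty$ as one approaches $\partial K$ inside $W$, so integrability of $\phi_{\infty}$ against $|T|$ near $\partial K$ is a quantitative question that the nullity of $K$ does not answer --- and without $\phi_{\infty}\in L^1_{loc}(|T|)$ neither the $T$-plurisubharmonicity (which requires it by definition) nor the limit $\mu=\lim_{k\rightarrow-\infty}T\wedge dd^c\max(\phi_{\infty},k)$ is known to exist. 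This is exactly where the paper does its real work: it integrates by parts against a test function $\xi$ with $dd^c\xi>0$ on a large ball and bounds $\int_{\mathbb{C}^3} T\wedge dd^c\psi'_n$ uniformly in $n$, obtaining $\phi_{\infty}\in L^1_{loc}(|T|)$, which yields the existence of $\mu$ and $|T|(K)=0$ simultaneously; your proposal contains no substitute for this estimate. A lesser, repairable point: ``pluriharmonicity on the open set $W$'' is ill-posed, since $W$ is open only in $J$, not in $\mathbb{C}^3$, and $\log^+\|\cdot\|$ is not pluriharmonic; one can patch this (near each leg of the escape cycle a single coordinate dominates on $J$, so the approximants are restrictions of pluriharmonic functions converging uniformly on neighborhoods in $J$), but the paper avoids the issue entirely, deriving $\text{supp}(\mu)\subset K$ from the $f^{3}$-invariance of $\mu$ together with the attraction $f^{3}(W\cap V_{A^+})\subset\subset W\cap V_{A^+}$ of Lemma \ref{below}, exhausting $W$ by the preimages $f^{-3N}(W\cap V_{A^+})$.
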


\begin{proof}
Let $V_{A^+}$ be a small neighborhood of the point of indeterminacy $A^+$.
\smallskip


\noindent{\bf Construction of $\phi_{\infty}$.} It is sufficient to construct the function $\phi_{\infty}$ on $J_{A^+}:= K\sqcup W_{A^+}$ and to check the stated properties on this set. We first construct $\phi_{\infty}:J_{A^+}\rightarrow [-\infty,+\infty]$ as a limit of a decreasing sequence of functions. Let 
$$ 
\psi_n(p):=\log^+ \left(\parallel f^{3n}(p)\parallel\right)-n\log |d|.
$$
Then we define a decreasing sequence on $J_{A^+}$ by $\dis\psi'_n:={\sup_{i\geq n}}^*\psi_i$, and its limit
$$
\dis\phi_{\infty}:={\limsup_{n\rightarrow +\infty}}^*\psi_n,
$$
where we have denoted by ${\sup}^*$ the usc-regularization of the sup. Note now that the limit $\dis\phi_{\infty}:=\lim_{n\rightarrow +\infty}\psi'_n$ defined on $W_{A^+}\cap V_{A^+}$ satisfies $\phi_{\infty}\circ f^{3}=\phi_{\infty}+\log |d|$. Indeed,
\begin{align}\label{boum!} 
\phi_{\infty}\circ f^{3} &=\lim_{n\rightarrow +\infty}{\sup_{i\geq n}}^* \left(\psi_i\circ f^{3} \right) =\lim_{n\rightarrow +\infty}{\sup_{i\geq n}}^* \left(\psi_{i+1}\right)+\log |d| \nonumber \\
& =\lim_{n\rightarrow +\infty}\left(\psi'_{n+1}\right)+\log |d|=\phi_{\infty}+\log |d|.
\end{align}

\smallskip

\noindent{\bf The sequence $(\psi_n)$ is locally uniformly bounded on $W_{A^+}\cap V_{A^+}$}.

By Lemma \ref{below}, it suffices to verify that
\begin{equation}\label{eq.sigma}
\sigma_{n,\varepsilon}:= \sum_{i=1}^n \log \left(|d|+\frac{\varepsilon}{(|d|-\varepsilon)^{i-1}}  \right)
-\sum_{i=1}^n \log \left(|d|-\frac{\varepsilon}{(|d|-\varepsilon)^{i-1}}  \right)
\end{equation}
does not blow up. Now
$$
\sigma_{n,\varepsilon}= \sum_{i=1}^n\log \left(1+\frac{\varepsilon}{|d|(|d|-\varepsilon)^{i-1}}  \right)-\sum_{i=1}^n\log \left(1-\frac{\varepsilon}{|d|(|d|-\varepsilon)^{i-1}}  \right),
$$ 
hence $(\sigma_{n,\varepsilon})$ is a difference of uniformly convergent series, since
$$  \log \left(    1\pm  \frac{\varepsilon}{|d|(|d|-\varepsilon)^{i-1}  }\right)\thicksim \pm\frac{ \varepsilon}{|d|(|d|-\varepsilon )^{i-1}}.$$
Thus $(\psi_n)$ is locally uniformly bounded near infinity. Remark that, up to shrinking $V_{A^+}$, we can take $\varepsilon$ in the previous expression as small as we want: thus $\forall n\geq 0, \psi'_{n}(z)-\log^+\parallel z\parallel=o(1)$ as $\parallel z \parallel \rightarrow +\infty$.
\smallskip

\noindent{\bf We have $\phi_{\infty}-\log^+\parallel .\parallel=o(1)$ near infinity and the sequence $(\psi'_n)$ is locally uniformly bounded on $J$ near infinity.} Fix a neighborhood $V_{A^+}$ of $A^+$ and $\varepsilon>0$ such that the formula given by Lemma \ref{below} holds in $V_{A^+}\cap J$. Equation (\ref{eq.sigma}) implies that for all $n\geq 0$ we have on $V_{A^+}\cap J$ $$\dis\log^+\parallel z \parallel+c_{\varepsilon}\leq \psi_n(z)\leq \log^+\parallel z \parallel+c_{\varepsilon}$$ for some constant $c_{\varepsilon}>0$. We can take $\dis c_{\varepsilon}=\lim_{n\rightarrow +\infty}\sigma_{n,\varepsilon}<+\infty$, where $\sigma_{n,\varepsilon}$ is given by Equation (\ref{eq.sigma}). Then $\dis\lim_{\varepsilon\rightarrow 0}c_{\varepsilon}=0$. We also have on $V_{A^+}\cap J$, for all $n\geq 0$, $$\dis\log^+\parallel z \parallel+c_{\varepsilon}\leq \psi'_n(z)\leq \log^+\parallel z \parallel+c_{\varepsilon}.$$
Letting $n\rightarrow +\infty$, we obtain $$\dis\log^+\parallel z \parallel+c_{\varepsilon}\leq \phi_{\infty}(z)\leq \log^+\parallel z \parallel+c_{\varepsilon}$$on $V_{A^+}\cap J$. We conclude by shrinking $V_{A^+}$ and letting $\varepsilon\rightarrow 0$. 

Thus $\phi_{\infty}-\log^+\parallel .\parallel=o(1)$ near infinity. In particular, the sequence $(\psi'_n)$ is locally uniformly bounded on $J\cap V_{A^+}$.

\smallskip

\noindent{\bf The function $\phi_{\infty}$ is locally bounded on $W_{A^+}$.} Indeed, $\phi_{\infty}$ is locally bounded near infinity since $\phi_{\infty}-\log^+\parallel .\parallel=o(1)$ near infinity. Now let $x\in W$ and $N\geq 0$ such that $f^{3N}(x)\in V_{A^+}$. Thus $\phi_{\infty}(x)=\phi_{\infty}\circ f^{3N}-N\log|d|$. Since $\phi_{\infty}-\log^+\parallel \cdot \parallel =o(1)$ near infinity, we deduce that $-\infty<\phi_{\infty}(x)<+\infty$ on $W$. Obviously, $\phi_{\infty}|_K=-\infty$. 
\smallskip

\noindent{\bf{The function $\phi_{\infty}$ is continuous on $W_{A^+}$.}} Choose $\varepsilon$ and $V_{A}^+$ sufficiently small such that 
$\left|\phi_{\infty}-\log^+\parallel . \parallel \right|<\varepsilon$ in $W_{A^+}\cap V_{A}^+$. Take $x,y\in W$. There exists $N\geq 0$ such that $x,y\in f^{-N}(W_{A^+}\cap V_{A^+})$. Then 
$$
\begin{array}{rcl} \left|\phi_{\infty}(x)-\phi_{\infty}(y)\right| & = & \left|\phi_{\infty}\circ f^{3N}(x)-\phi_{\infty}\circ f^{3N}(y)\right|\\
  &  &  \\
   &  \leq  & \left|\log^+\parallel f^{3N}(x)\parallel -\log^+\parallel f^{3N}(y) \parallel\right|+2\varepsilon.\end{array}
$$
If $x\rightarrow y$ in $f^{-N}(W'\cap V_{A^+})$, we obtain 
$$\lim_{x\rightarrow y}|\phi_{\infty}(x)-\phi_{\infty}(y)|\leq 2\varepsilon.$$
The conclusion follows by letting $\e \rightarrow 0$.
\smallskip

\noindent{\bf{The function $e^{\phi_{\infty}}$ is continuous on $K\sqcup W_{A^+}$.}} It is sufficient to prove that $\phi_{\infty}(x)\rightarrow -\infty$ when $x\rightarrow \partial K$. Let, for $n \in \mathbb{N}$,
$$
V_n:=f^{-3n}(V_{A^+}\cap W)\setminus f^{-3(n-1)}(V_{A^+}\cap W)
$$
be disjoint bounded sets on $W$. Then, by Lemma \ref{below},
$
W=\cup_{k\in \mathbb{N}}V_k.
$
Fix $k_0>1$ an integer. There exists a real number $M$ such that $\phi_{\infty}<M$ on $V_{k_0}$. Take $k \geq k_0$. For all $x\in V_k$, there exists $N\geq 0$ such that $f^{3N}(x)\in V_{k_0}$. Then
$$
\phi_{\infty}(x)=\phi_{\infty}\circ f^{3N}(x)-N\log |d|<M-N\log |d|.
$$
Since $|d|>1$, the conclusion follows by noting that $N\rightarrow +\infty$ when $x\rightarrow \partial K$.

\smallskip


\noindent {\bf The measure $\mu:=T\wedge  dd^c\phi_{\infty}$ is well defined in $\mathbb{C}^3$}. First note that for all $k\in\mathbb{R}$, the function $\phi_{\infty,k}:=\max(\phi_{\infty},k):W' \rightarrow [-\infty,+\infty[$ can be extended by the value $k$ into a $T$-plurisubharmonic function on the whole $J$. For all $k\in\mathbb{R}$, the current $T\wedge dd^c\phi_{\infty,k}$ is defined by its action in $\mathbb{C}^3$ on a test function $\xi$:
$$
 \displaystyle<T_f\wedge T_{f^{-1}}\wedge dd^c\phi_{\infty,k}, \xi >:=<\phi_{\infty,k} T_f\wedge T_{f^{-1}}, dd^c \xi >.
$$
Note that its support is included in $W'$. We want to define the current $T\wedge dd^c\phi_{\infty}$ by its action in $\mathbb{C}^3$ on a test function $\xi$:
$$
 \displaystyle<T_f\wedge T_{f^{-1}}\wedge dd^c\phi_{\infty}, \xi >:=\lim_{k\rightarrow -\infty}<\phi_{\infty,k} T_f\wedge T_{f^{-1}}, dd^c \xi >.
$$

Since the support of $T\wedge dd^c\phi_{\infty,k}$ is included in $W'$, it suffices to check that $\phi_{\infty}\in L^1_{loc}(|T|)$. Note that this implies $ |T|\left(\phi_{\infty}=-\infty\right)=0$. Fix $R>0$. Let $\xi$ be a test function such that $dd^c\xi>0$ on the closed ball $\overline{B}(0,R)$. Since
$$
\lim_{x\rightarrow\infty, x\in J}\phi_{\infty}(x)=+\infty,
$$
we can suppose that $\psi'_{n}\leq 0$ on $\text{supp}(\xi)$ for some $N>0$ and $n\geq N$. Then for all $n\geq N$
$$
\dis -\int_{\mathbb{C}^3}|\psi'_{n}|T\wedge dd^c\xi=\int_{\mathbb{C}^3}\xi T\wedge dd^c \psi'_{n}\geq c\int_{\mathbb{C}^3}T\wedge dd^c\psi'_n
$$
for some $c>0$. We can show that this latter integral is bounded from below by a constant independant of $n$.
\smallskip

\noindent {\bf The support of $\mu$ is compact in $\mathbb{C}^3$.} 
To simplify notations we write $f$ for $f^{3}$. Note that $ f\left(W\cap V_{A^+}\right)\subset\subset W\cap V_{A^+}$ by Lemma $\ref{below}$. Since $f$ is an automorphism, we have for all $n\geq 0$ 
$$ \int_{W\cap V_{A^+}}\mu=\int_{W\cap V_{A^+}}\left(f^{n}\right)_*\mu=\int_{f^n\left(W\cap V_{A^+}\right)}\mu.$$
Letting $n\rightarrow +\infty$, we see that the previous integral is equal to $\mu(A^+)=0$.
\noindent For every compact $Q\subset W$, there exists $N\geq 0$ such that $Q\subset f^{-N}(W\cap V_{A^+})$. Thus $ \mu|_Q=f^{N*}\left(f^N_*\mu|_Q\right)=f^{N*}0=0$. Therefore $\text{supp}(\mu)\subset \overline{K}$. We have remarked at the beginning of this section that $K$ is closed. Thus $\text{supp}(\mu)\subset K$.
\smallskip



\noindent{\bf The total mass of $\mu$ is not vanishing}.  Denote $\infty:=J\cap E$, where $E$ is the exceptional divisor at infinity. 

\noindent Now let $A\in \mathbb{R}^-\setminus \{0\}$. There exists $n_0\in \mathbb{N}$ such that $\phi_{n}\leq A$ on $\{\phi_{\infty}<2A\}\subset W'$ for $n\geq n_0$. Let then $\delta>0$ sufficiently small for that $\phi_n< A$ on $V_{\delta}\cap W''$, where  $V_{\delta}$ is the $\delta$-neighborhood  of $W'$. Up to diminishing $\delta$, we can also suppose that $U_{\delta}:=V_{\delta}\cap \{\parallel z\parallel < 2\}$ does not intersect $W''$. Recall that
$\displaystyle \lim_{x\rightarrow\infty, x\in W'}\phi_{\infty}(x)-\log^+\parallel x \parallel=0.$ Then the Comparison Theorem (Corollary 2.7 in \cite{DE}) implies that $$\displaystyle  T\wedge dd^c \max(\phi_n, A)(U_{\delta})\geq \int_{U_{\delta}}T\wedge dd^c \log^+\parallel z\parallel^{\frac{1}{2}}>0$$
for all $n\geq n_0$. This latter value does not depend neither on $\delta$, $A$ nor $n$. Thus we can diminish $A$ towards $-\infty$ up to diminishing $\delta$ towards $0$, in order to obtain $$T\wedge dd^c \phi_{\infty} (U_{\delta})=\displaystyle \lim_{A \rightarrow -\infty} \lim_{\delta\rightarrow 0} T\wedge dd^c \max(\phi_n, A)(U_{\delta})\geq \int_{U_{\delta}}T\wedge dd^c log^+\parallel z\parallel^{\frac{1}{2}}>0.$$


 
 
\flushright \qed
\end{proof}

We now come to the main technical estimate used in the proof above. Recall that $J:=\text{supp}\left(T_f\wedge T_{f^-}\right)$ decomposes into three sets $J=K\sqcup W'\sqcup W''$ invariant by $f^{15}$, where $K\subset \mathbb{C}^3$ is compact and $W',W''$ are sets that reach infinity in four or five points and are periodic under $f$ with period $3$ and $5$ respectively.

\begin{lemma}\label{below}
 Suppose $|d|>1$. For all $\varepsilon>0$, there exists a neighborhood $V_{\varepsilon}$ of the divisor at infinity such that for all $p\in W'\cap V_{\varepsilon}$ and for all $n\geq 1$, 
 $$
  \parallel p \parallel_+\prod_{i=1}^n \left(|d|-\frac{\varepsilon}{(|d|-\varepsilon)^{i-1}}\right)\leq \parallel f^{3n}(p)\parallel_+
  \leq \parallel p \parallel_+\prod_{i=1}^n \left(|d|+\frac{\varepsilon}{(|d|-\varepsilon)^{i-1}}\right).
  $$
  
\noindent In particular, $W' \cap E$ is attracting for $f^{3}$ on $W'$, in the sense that 
 $$ 
 f^{3}\left(W'\cap V_{\varepsilon}\right)\subset\subset\left(W'\cap V_{\varepsilon}\right).
 $$
\end{lemma}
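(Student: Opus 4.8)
The plan is to reduce everything to a local computation near the attracting point $A^+\in E$ in the chart where $f^3$ acts most transparently, and to extract the multiplicative factor $|d|$ that governs the rate of escape to infinity. First I would recall from Proposition \ref{pro.partition} that $W'$ is invariant under $f^3$ and reaches infinity at the points $A^+$, $A^-$, $C$ (cyclically permuted by $f$), so it suffices to establish the estimate in a neighborhood of one of these points, say $A^+$, and then transport it by the finitely many maps $f,f^2$ to the neighborhoods of $A^-$ and $C$. Working in the local chart of $X$ adapted to $A^+$ (cf. the chart expressions \eqref{eq.prout} and \eqref{cavalavie?}), I would write the leading-order behavior of $f^3$ on $W'\cap V_\varepsilon$. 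The key structural fact is that, because the jacobian of $f$ is constant equal to $|d|$ and the dynamics on $W'$ is a rotation-like cyclic motion among three charts, one full cycle $f^3$ multiplies the relevant dominant coordinate by a factor asymptotically equal to $|d|$ as one approaches the divisor at infinity.

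**The one-step estimate**

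The heart of the argument is a single-step comparison: I would show that for $p\in W'\cap V_\varepsilon$,
$$
\bigl(|d|-\eta(p)\bigr)\,\parallel p\parallel_+\;\le\;\parallel f^{3}(p)\parallel_+\;\le\;\bigl(|d|+\eta(p)\bigr)\,\parallel p\parallel_+,
$$
where $\eta(p)\to 0$ as $p$ approaches the divisor at infinity. Here $\eta(p)$ is controlled by the lower-order terms in the chart expression of $f^3$ (the terms involving $b,c$ and the denominators $A,B,C$), all of which are subdominant near infinity. Concretely, near $A^+$ one has $\parallel f^{3}(p)\parallel_+ = |d|\,\parallel p\parallel_+\bigl(1+O(1/\parallel p\parallel_+)\bigr)$, and shrinking $V_\varepsilon$ makes the $O$-term smaller than any prescribed $\varepsilon/|d|$ in absolute value. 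This is where the hypothesis $|d|>1$ first matters: it guarantees that the factor is bounded below by something exceeding $1$ once $\varepsilon$ is small, so a single application of $f^3$ strictly increases $\parallel\cdot\parallel_+$ and pushes points further toward infinity, keeping them inside $V_\varepsilon$.

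**Iterating to obtain the product bound**

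Once the one-step estimate is in hand, I would iterate it. The subtlety is that the error $\eta$ at step $i$ depends on the current norm $\parallel f^{3(i-1)}(p)\parallel_+$, which is itself growing. Since after $i-1$ steps the norm has grown at least like $(|d|-\varepsilon)^{i-1}\parallel p\parallel_+$ (by the lower bound applied inductively), the error term at step $i$ is of size at most $\varepsilon/(|d|-\varepsilon)^{i-1}$ relative to $|d|$; this is exactly the geometric decay recorded in the stated product. I would therefore set up a clean induction: assuming the bound through step $n-1$, the norm is large enough that the step-$n$ factor lies in $\bigl[|d|-\varepsilon/(|d|-\varepsilon)^{n-1},\,|d|+\varepsilon/(|d|-\varepsilon)^{n-1}\bigr]$, and multiplying the telescoping factors yields the two displayed products. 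The final assertion $f^{3}(W'\cap V_\varepsilon)\subset\subset(W'\cap V_\varepsilon)$ then follows immediately: the $n=1$ lower bound with factor $|d|-\varepsilon>1$ shows norms strictly increase, so $f^3$ maps the neighborhood compactly into itself (using that $W'$ is $f^3$-invariant and that $A^+$ is the only accumulation point at infinity).

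**Main obstacle**

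I expect the main difficulty to be twofold: first, choosing the correct chart and verifying that the dominant factor really is $|d|$ rather than some other combination of the parameters — this requires composing the three local chart expressions for $f$ carefully and checking that the cross-terms genuinely vanish to the right order as one approaches $E$. Second, and more delicate, is controlling uniformity: the neighborhood $V_\varepsilon$ and the error bounds must be made uniform over all of $W'$ (which reaches infinity at several points), not just near a single $A^+$; handling the transport of the estimate through $f$ and $f^2$ while maintaining the precise geometric form of the error requires tracking how $\parallel\cdot\parallel_+$ transforms under these auxiliary maps. The inductive propagation of the error through the $(|d|-\varepsilon)^{i-1}$ denominators is routine once the one-step estimate is uniform, but securing that uniformity against the multi-point structure of $W'$ at infinity is the real work.
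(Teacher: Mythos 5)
Your overall skeleton --- a one-cycle multiplicative estimate $\parallel f^3(p)\parallel_+ \approx |d|\,\parallel p\parallel_+$ near the points $A^+,A^-,C$, followed by an induction in which the error at step $i$ decays like $\varepsilon/(|d|-\varepsilon)^{i-1}$ --- is the same as the paper's. But the mechanism you give for the geometric decay of the errors is wrong, and that mechanism is precisely the crux of the lemma. You claim the one-step error satisfies $\eta(p)=O(1/\parallel p\parallel_+)$, so that growth of the norm forces decay of the errors. This is false on any neighborhood of $A^+$ in $X$: such a neighborhood is parametrized by $(y,z/x,1/x)$, and the relative error in the third coordinate of $f$ is $|yz+by+cz|/|dx|$, which contains the terms $|y|\cdot|z/x|/|d|$ and $|c|\cdot|z/x|/|d|$. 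A point with $y=\delta$, $z=\delta x$ and $|x|\to\infty$ stays in that neighborhood and has relative error of order $|\delta^2+c\delta|/|d|$, which does not tend to $0$ as $\parallel p\parallel\to+\infty$. Nothing known a priori about the shape of $W'$ near $A^+$ (it is defined dynamically, as a set of points whose orbits converge to infinity) rules out such points, so your inductive step --- ``the norm has grown by $(|d|-\varepsilon)^{i-1}$, hence the error at step $i$ is at most $\varepsilon/(|d|-\varepsilon)^{i-1}$'' --- is a non sequitur; in effect it presupposes a transverse decay estimate equivalent to the lemma itself.

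The paper supplies the missing mechanism, and it is not norm growth but an exact algebraic invariance. It works with the cones $V_{A^+,\varepsilon}:=\{|yz+by+cz|<\varepsilon|dx|\}$ (and the analogues at $A^-$ and $C$) rather than with metric neighborhoods. If $p=(x_0,y_0,z_0)\in V_{A^+,\varepsilon}$, then $f(p)=(x_1,y_1,z_1)=(y_0,z_0,dx_0(1+\delta))$ with $|\delta|<\varepsilon$, and the numerator is preserved \emph{exactly}: $x_1y_1+bx_1+cy_1=y_0z_0+by_0+cz_0$, while the new denominator satisfies $|dz_1|\geq |d|(1-\varepsilon)|dx_0|$. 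Hence $f\left(V_{A^+,\varepsilon}\right)\subset V_{A^-,\varepsilon/(|d|(1-\varepsilon))}$: the cone aperture --- not the reciprocal of the norm --- contracts by a factor comparable to $|d|$ at each application of $f$, uniformly over the cone. It is this aperture contraction, combined with the cyclic structure from Proposition \ref{pro.partition}, that lets one iterate the one-cycle estimate with the stated errors, and it is also what yields $f^{3}\left(W'\cap V_{\varepsilon}\right)\subset\subset W'\cap V_{\varepsilon}$: norm growth alone would not keep the image inside the neighborhood, since $V_\varepsilon$ is not described by a norm condition. To repair your argument you would have to replace ``error $=O(1/\parallel p\parallel_+)$'' by this cone-mapping property, i.e., essentially redo the paper's computation.
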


\begin{proof}

For $\varepsilon \in ]0, d[$,  denote $V_{A^+, \varepsilon} := \{|yz+by+cz| < \varepsilon|dx|\}$, $V_{A^-, \varepsilon} := \{|xy+bx+cy| < \varepsilon|dz|\}$, and
 $V_{C, \frac{\varepsilon}{d}} := \{|xz+by+cz| < \varepsilon|dy|\}$.\\
 
\noindent {\bf{The sets $V_{A^+, \varepsilon}$ and $V_{A^-, \varepsilon}$ are neighborhoods of $A^+$ and $A^-$ respectively. }}The chart $T=0$
 of $\displaystyle\mathbb{P}^3(\mathbb{C})$
is parametrized by $\displaystyle (\frac{y}{x}, \frac{z}{x}, \frac{1}{x})$. Recall that $\pi(A^+) = p^+ = [1,0,0,0]$ is located at the origin of this chart. One can check that a neighborhood of $A^+$, centered at $A^+$, is parametrized by $(y, z/x, 1/x)$, and deduce that $\{|y|<\varepsilon, |z/x|<\varepsilon\}\subset V_{A^+, \varepsilon}$ is a neighborhood of $A^+$. We prove exactly by the same arguments that $V_{A^-, \varepsilon}$ is a neighborhood of $A^-$.\\
In order to simplify notations, let us denote $\displaystyle (x_0,y_0,z_0)\underset{\varepsilon}{\approx} (x_1,y_1,z_1)$ if there exist $\delta_x,\delta_y,\delta_z\in ]-\varepsilon, \varepsilon[$ such that $\displaystyle (x_0,y_0,z_0) = ((1+\delta_x)x_1,(1+\delta_y)y_1,(1+\delta_z)z_1)$. Then it is straightforward that $\forall (x_0, y_0, z_0)\in V_{A^+, \varepsilon}$,\begin{equation}\label{equiv1}\displaystyle f(x_0, y_0, z_0)\underset{\varepsilon}{\approx} (y_0, z_0, dx_0)\in V_{A^-, \frac{\varepsilon}{d}}.\end{equation}

Hence, denoting by $\displaystyle\|\cdot\|$ the max norm, $$\displaystyle |d-\varepsilon| \|p\|_+ \leq \|f^3(p)\|\leq |d+\varepsilon| \|p\|_+.$$Thanks to Proposition \ref{pro.partition}, we can iterate these inequalities in order to obtain the inequalities of the statement, for $\displaystyle p\in W'\cap ( V_{A^+, \varepsilon}\cup  V_{A^-, \varepsilon} \cup  V_{C, \varepsilon}).$\qed

\end{proof} 

\begin{remark}\label{epsilon}
Let $\varepsilon'\, \varepsilon'' \in ]0,\varepsilon[$. By the proof of the previous lemma,  $\{|z|<\varepsilon' |dx|, |y|<\varepsilon''\}$ extends in $X$ in a neighborhood of $A^+$. Then Equation (\ref{equiv1}) implies that \begin{equation}\label{first}\displaystyle f\left(\{|z|<\varepsilon' |dx|, |y|<\varepsilon''\}\cap J\right)\subset\{|x|<\varepsilon'', |y|<\varepsilon'|dz|\}\cap J.\end{equation}

\noindent On the other hand \begin{equation}\label{later}f\left(\{ |z|<\varepsilon' |dx|, |y|<\varepsilon''\}\cap J\right)\subset \{|x|<\varepsilon |dz|, |y|<\varepsilon\}\cap J\end{equation} for $\varepsilon', \varepsilon''$ sufficiently small, since the right hand set in Equation (\ref{later}) is a neighborhood of $A^-$ in $X$. Then Equations (\ref{first}) and (\ref{later}) give $$\displaystyle f\left(\{ |z|<\varepsilon' |dx|, |y|<\varepsilon'\}\cap J\right)\subset\{|x|<\varepsilon'', \text{ }|y|<\varepsilon\}\cap J.$$ We conclude that  $\displaystyle \{|x|<\varepsilon, \text{ }|y|<\varepsilon'\}\cap J\neq \emptyset$. In the same way, $\displaystyle \{|z|<\varepsilon, \text{ }|y|<\varepsilon'\}\cap J\neq \emptyset$. 
\end{remark}


\subsection{The measure $\mu$ is canonical} 

We push our analysis further.  Let us denote by $\log^A$ a function defined on $J$ by $\log^A:=\max(A,\log)$ on $W'\cup K$, for some constant $A\in\mathbb{R}$ sufficiently large so that $K\subset U:=\{\log^A\parallel z\parallel =A\}\cap J$, and by $\log^A:=A$ on $W''$. We have

 \begin{theorem}\label{th.canonique}
Suppose $|d|>1$. Then
$$
\mu  =  \dis\lim_{n\rightarrow +\infty} T\wedge dd^c{\log}^A \parallel f^{3n} \parallel=  \dis\lim_{n\rightarrow +\infty}T\wedge dd^c\left(\frac{1}{n}\sum_{i=0}^{n-1}{\log}^A\parallel f^{i}\parallel \right)
$$
in the weak sense of currents.
\end{theorem}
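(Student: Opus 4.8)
The plan is to identify both limits with the measure $\mu=T\wedge dd^c\phi_\infty$ produced in Theorem~\ref{measure}, exploiting the cocycle relation $\phi_\infty\circ f^{3}=\phi_\infty+\log|d|$ on $W'$, the asymptotics $\phi_\infty=\log^+\parallel z\parallel+o(1)$, and the continuity of the operator $v\mapsto T\wedge dd^c v$ along suitably controlled families of potentials (Bedford--Taylor), all the while remembering that $\{\phi_\infty=-\infty\}=K$ is $|T|$-negligible. The standing tool is that $T=\frac{\sigma}{3+\sigma}T_f\wedge T_{f^{-1}}$ is a closed positive current which is $f$-invariant in $\mathbb{C}^3$ by (\ref{bof}), so that $f^*(T\wedge dd^c v)=T\wedge dd^c(v\circ f)$ for every admissible $v$.

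\textbf{First limit.} Since $T$ is closed, $dd^c$ kills the spatially constant $n\log|d|$, hence
$$T\wedge dd^c{\log}^A\parallel f^{3n}\parallel=T\wedge dd^c\bigl({\log}^A\parallel f^{3n}\parallel-n\log|d|\bigr).$$
Writing $u_n:={\log}^A\parallel f^{3n}\parallel-n\log|d|$, I would observe that on $W'$, once $\log\parallel f^{3n}\parallel>A$, one has $u_n=\psi_n$, so the construction in the proof of Theorem~\ref{measure} (the usc-regularized decreasing sequence $\psi'_n$ and the uniform bounds coming from $\sigma_{n,\varepsilon}$ in Lemma~\ref{below}) shows that $u_n$ tends, after usc-regularization, to $\phi_\infty$ and is locally uniformly bounded on $W'=J\setminus K$; on $K$ and $W''$ the truncation forces $u_n\to-\infty$, matching $\phi_\infty\equiv-\infty$ on $K$ and leaving no mass on $W''$ (where $\log^A$ is constant, so $dd^c$ vanishes). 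Bedford--Taylor continuity along this monotone, locally bounded family then gives $T\wedge dd^c u_n\to T\wedge dd^c\phi_\infty=\mu$, the truncation $\log^A$ serving precisely to keep the approximants genuine potentials with support controlled near $K$, while $|T|(K)=0$ discards the polar locus.

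\textbf{Second limit.} Setting $\nu_0:=T\wedge dd^c{\log}^A\parallel z\parallel$ and using $f^*T=T$, one has $T\wedge dd^c{\log}^A\parallel f^{i}\parallel=(f^{i})^*\nu_0$, so the Ces\`aro average equals $\frac1n\sum_{i=0}^{n-1}(f^{i})^*\nu_0$. Grouping the sum by residues modulo $3$ and telescoping with the cocycle (whose spatially constant increment dies under $dd^c$), the first limit upgrades to $T\wedge dd^c{\log}^A\parallel f^{3n+j}\parallel\to(f^{j})^*\mu$ for $j=0,1,2$, so the average converges to $\frac13\bigl(\mu+f^*\mu+(f^{2})^*\mu\bigr)$. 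It remains to prove that this symmetrization is $\mu$ itself, i.e.\ that $f^*\mu=\mu$. Both $\mu$ and $f^*\mu=T\wedge dd^c(\phi_\infty\circ f)$ are carried by the compact $f$-invariant set $K=\text{supp}(\mu)$, and their difference is $T\wedge dd^c h$ with $h:=\phi_\infty\circ f-\phi_\infty$; on $W'$ both $\phi_\infty$ and $\phi_\infty\circ f$ are $T$-pluriharmonic (as $\mu$ charges only $K$) and differ from $\log^+\parallel z\parallel$ by $O(1)$, so $h$ is $T$-pluriharmonic on $W'$ and bounded near infinity. I would then show, using $|T|(K)=0$ together with the growth control of Lemma~\ref{below} and the Comparison Theorem (Corollary~2.7 in \cite{DE}), that $T\wedge dd^c h$ puts no mass on $K$ either, whence $f^*\mu=\mu$; the same argument applied to $(f^{2})^*\mu$ identifies the symmetrization with $\mu$ and closes the second equality.

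\textbf{Where the difficulty lies.} The hard part is twofold. Analytically, one must justify passing to the limit in the wedge products $T\wedge dd^c u_n\to T\wedge dd^c\phi_\infty$ when the limiting potential is $-\infty$ on the whole compact $K$ and the approximants $\log^A$ are only piecewise defined, hence not globally plurisubharmonic: this forces one to argue on $J\setminus K$ with the Bedford--Taylor machinery and to rule out mass escaping to infinity, where $T$ itself is singular, via the uniform estimates of Lemma~\ref{below}. Dynamically, the delicate point is the identity $f^*\mu=\mu$, equivalently the vanishing of $T\wedge dd^c(\phi_\infty\circ f-\phi_\infty)$ across $K$, which is a removable-singularity statement for the $T$-pluriharmonic difference on the pluripolar set $K$; this is the step I expect to require the most care, and it is precisely what makes $\mu$ canonical rather than merely $f^{3}$-invariant.
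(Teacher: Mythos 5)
For the first equality your route is the paper's own: discard the additive normalization (killed by $dd^c$), identify the limit of the normalized potentials with $\phi_{\infty}$ via the $\psi_n$, $\psi'_n$ construction of Theorem \ref{measure}, and pass to the limit in $T\wedge dd^c(\cdot)$ by monotonicity. One caveat: your $u_n=\log^A\parallel f^{3n}\parallel-n\log|d|$ is \emph{not} decreasing (it is only ``quasi-decreasing''); the paper's actual device is to subtract instead the partial sums $\sum_{k=1}^n\log\left(|d|+\varepsilon/(|d|-\varepsilon)^{k-1}\right)$ furnished by Lemma \ref{below}, obtaining a sequence $\phi'_n$ that is genuinely decreasing (first near infinity, then everywhere), differs from $\psi_n$ by uniformly bounded constants, and hence converges to $\phi_{\infty}+c$; since $dd^c$ ignores $c$, the first limit follows. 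Your appeal to a ``monotone, locally bounded family'' glosses over exactly this correction (and over the fact that $\phi_{\infty}$ is not locally bounded near $K$), but the mechanism is the same.

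The second equality is where you and the paper part ways: the paper's proof stops after the first equality and never addresses the Ces\`aro average at all, so your reduction --- grouping indices modulo $3$, obtaining $\frac{1}{3}\left(\mu+f^*\mu+(f^{2})^*\mu\right)$ as the Ces\`aro limit, and observing that, given $(f^{3})^*\mu=\mu$, this equals $\mu$ if and only if $f^*\mu=\mu$ --- is correct and goes genuinely beyond the paper; it rightly exposes that the statement implicitly asserts full $f$-invariance of $\mu$. However, your proof of $f^*\mu=\mu$ is a real gap, and as formulated it is circular: $\mu$ and $f^*\mu$ are positive measures of the same total mass, both concentrated \emph{entirely} on $K$, so ``showing that $T\wedge dd^c h$ puts no mass on $K$'' is not a lemma on the way to $f^*\mu=\mu$; it is that statement verbatim. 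The tools you invoke cannot deliver it softly: $|T|(K)=0$ does not prevent $T\wedge dd^c(\cdot)$ from charging $K$ (the measure $\mu$ itself is the counterexample); $h=\phi_{\infty}\circ f-\phi_{\infty}$ is a difference of two functions that are both $-\infty$ exactly on $K$, so no boundedness of $h$ near $K$ is available for a removable-singularity argument; and $W=J\setminus K$ is disconnected (it has the five components $W_{A^+},W_{A^-},W_{C},W_B,W_{B'}$), so even granting that $h$ is locally constant on $W'$ --- which itself requires a one-step norm estimate that the three-step Lemma \ref{below} does not provide --- it may take \emph{different} constants on different components, and a locally constant function with distinct values across the separating set $K$ has no a priori reason to be $dd^c$-negligible there. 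To close the argument you would need either to compute these constants and show the discrepancy is invisible to $T\wedge dd^c$, or a uniqueness/comparison principle for $T$-plurisubharmonic potentials of measures carried by $K$; neither is supplied in your sketch, and neither is supplied by the paper.
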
 

\begin{proof}
Let us show the first equality.
\smallskip

\noindent {\bf Preliminaries.} Define on $J$ for all $n\geq 0$
\begin{equation}\label{eq.quasidecreasing}
\dis\phi'_n:=\log^A\parallel f^{3n}\parallel-\sum_{k=1}^n\log \left(|d|+\frac{\varepsilon}{(|d|-\varepsilon)^{k-1}}\right).
\end{equation}

\noindent {\bf The sequence $(\phi'_n)$ is decreasing near infinity.} It results from Lemma \ref{below} that we have on $J$ near infinity, for all $n\geq 1$, 
$$
\dis {\parallel f^{3n}\parallel}\leq {\parallel f^{3(n-1)}\parallel}\cdot\left(|d| + \frac{\varepsilon}{(|d|-\varepsilon)^{3(n-1)}}\right).
$$
Thus for all $n\geq 1$, 
$$
\dis\log^A\parallel f^{3n}\parallel-\log \left(|d|+\frac{\varepsilon}{(|d|-\varepsilon)^{n-1}}\right)\leq \log^A\parallel f^{3(n-1)}\parallel.
$$
\noindent Therefore, the sequence $(\phi'_n)$ is decreasing on $J$ near infinity. (Note that since infinity is attracting on $W$, for all compact $Q$ in $W$ there exists $N\geq 0$ such as $(\phi'_n)$ is decreasing on $Q$ for $n\geq N$). 

\noindent {\bf The sequence $(\phi'_n)$ is decreasing everywhere.} The sequence $(\phi'_n)$ is decreasing outside $V:=\displaystyle f^{-1}(U)\subset U$. We claim that the sequence $\phi'_n$ is in fact decreasing on $W'$ (therefore on $J$).\

\noindent Indeed, take $x\in W'$ and $n\in \mathbb{N}$. If $f^n(x)\in V$, then $\log^A|| f^{n+1}(x) ||= \log^A|| f^n(x) ||=0$, therefore $\phi'_{n+1}<\phi'_n$. If $x\notin V$, then $\phi'_{n+1}(x)\leq \phi'_n(x)$. 

As in the proof of Theorem \ref{measure} we set $\psi_n:=\log^+|| f^{3n}||-n\log |d|$.
\smallskip

\noindent {\bf The sequence $\psi_n$ is quasi-decreasing.} We have seen in Theorem \ref{measure} that the difference $\phi'_n-\psi_n$ on $J$ is uniformly bounded. Thus there exists a constant $c$ such that
$$
\dis\lim_{n\rightarrow +\infty}\phi'_n={\limsup_{n\rightarrow +\infty}}^*\phi'_n={\limsup_{n\rightarrow +\infty}}^*\psi_n+c=\phi_{\infty}+c.
$$
Therefore, when $n\rightarrow +\infty$,
$$
T\wedge dd^c {\log}^+\parallel f^{3n}\parallel=T\wedge dd^c \phi'_n \rightarrow T\wedge dd^c \left(\phi_{\infty}+c\right)=\mu.
$$
\flushright \qed

 
\end{proof}
\begin{corollary}
The support of the measure $\mu$ is included in $\overline{W}\cap K$.
\end{corollary}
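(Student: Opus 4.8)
The plan is to refine the inclusion $\mathrm{supp}(\mu)\subset K$ already obtained in Theorem \ref{measure} (property (3)) by proving in addition that $\mathrm{supp}(\mu)\subset\overline{W}$. Since $J=\mathrm{supp}(T)$ is closed in $\mathbb{C}^3$ and $J=K\sqcup W$ with $K$ compact and $W$ open in $\mathbb{C}^3$, a direct point-set check shows that $J\setminus\overline{W}=K\setminus\overline{W}$ is exactly the relative interior $\mathrm{int}_J(K)$ of $K$ in $J$, and that $\overline{W}\cap K=K\setminus\mathrm{int}_J(K)$. Hence it suffices to prove that $\mu$ charges no open set $\Omega\subset\mathbb{C}^3$ with $\Omega\cap W=\emptyset$: the union of all such $\Omega$ is $\mathbb{C}^3\setminus\overline{W}$, so this gives $\mu(\mathbb{C}^3\setminus\overline{W})=0$, and together with $\mathrm{supp}(\mu)\subset K$ it yields $\mathrm{supp}(\mu)\subset\overline{W}\cap K$.

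First I would record the dynamical input. The set $K$ is forward invariant: for $q\in K$ and any $m\ge 0$ the point $f^m(q)$ has bounded forward orbit (a tail of that of $q$) and lies in the invariant set $J$, hence $f^m(q)\in K$. Since $K$ is compact, the whole forward orbit of $q$ is uniformly bounded, $\|f^{m}(q)\|\le R_K:=\max_{z\in K}\|z\|$ for all $m\ge 0$. The normalization imposed on the constant $A$, namely $K\subset U=\{\log^A\|z\|=A\}\cap J$, is precisely the requirement $A\ge\log R_K$. Consequently, as $f^{3n}(q)\in K\subset W'\cup K$, we get $\log^A\|f^{3n}(q)\|=\max(A,\log\|f^{3n}(q)\|)=A$ for every $q\in K$ and every $n$; that is, all the functions $\log^A\|f^{3n}\|$ equal the constant $A$ on $K$.

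Now fix an open set $\Omega\subset\mathbb{C}^3$ with $\Omega\cap W=\emptyset$, so that $\Omega\cap J\subset K$, and a test function $\xi$ supported in $\Omega$. The current $T\wedge dd^c\xi$ is supported in $J\cap\mathrm{supp}(\xi)\subset\Omega\cap J\subset K$, where $\log^A\|f^{3n}\|\equiv A$. Therefore
$$
\langle T\wedge dd^c\log^A\|f^{3n}\|,\,\xi\rangle=\langle \log^A\|f^{3n}\|\,T,\,dd^c\xi\rangle=A\,\langle T,dd^c\xi\rangle=A\,\langle dd^cT,\xi\rangle=0 ,
$$
because $T$ is closed, so $dd^cT=0$. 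Thus each approximating measure $T\wedge dd^c\log^A\|f^{3n}\|$ vanishes on $\Omega$, and by the canonical formula of Theorem \ref{th.canonique} so does their weak limit $\mu$. Hence $\mu(\Omega)=0$, and letting $\Omega$ range over all such open sets gives $\mu(\mathbb{C}^3\setminus\overline{W})=0$, which is exactly what we wanted.

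The one delicate point — the \emph{main obstacle} — is the middle computation: one must make sure, within the $T$-plurisubharmonic formalism, that a function which is locally constant on $\mathrm{supp}(T)$ contributes nothing to the Monge--Amp\`ere-type product $T\wedge dd^c(\cdot)$ near that locus, and this is precisely where the closedness of $T$ enters. Everything else is soft: the uniform bound on forward orbits follows from the compactness and forward invariance of $K$, and $A$ is large enough by construction. One may also bypass Theorem \ref{th.canonique} and argue directly from the definition of $\mu$ in Theorem \ref{measure}: the truncations $\phi_{\infty,k}=\max(\phi_\infty,k)$ equal $k$ on $K=(\phi_\infty=-\infty)$, so the same closedness computation gives $\langle\phi_{\infty,k}\,T,dd^c\xi\rangle=k\,\langle T,dd^c\xi\rangle=0$ for $\xi$ supported in such an $\Omega$, and passing to the limit $k\to-\infty$ again yields $\mu(\Omega)=0$.
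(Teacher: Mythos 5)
Your proposal is correct and takes essentially the same route as the paper: both express $\mu$ as the weak limit of the measures $T\wedge dd^c\,{\log}^A \parallel f^{3n}\parallel$ given by Theorem \ref{th.canonique}, observe that these measures put no mass outside $\overline{W}$, and conclude by combining this with the inclusion $\mathrm{supp}(\mu)\subset K$ from Theorem \ref{measure}. The only difference is one of detail: the paper simply asserts $\mathrm{supp}\left(T\wedge dd^c\,{\log}^A \parallel f^{3n}\parallel\right)\subset W$, whereas you justify that step explicitly (constancy of ${\log}^A \parallel f^{3n}\parallel$ on the forward-invariant compact $K$ together with the closedness of $T$), which fills in an unproven assertion but does not constitute a different argument.
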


\begin{proof} With the previous notations, for all $n\geq 0$, 
$$
\dis\text{supp}\left(T\wedge dd^c{\log}^A \parallel f^{3n} \parallel\right)\subset W.
$$
It follows that 
$
\text{supp}\left(\dis\lim_{n\rightarrow +\infty}T\wedge dd^c{\log}^+ \parallel f^{3n} \parallel\right)\subset \overline{W}.
$
Recall finally that $\text{supp}(\mu)\subset K$ (by Theorem \ref{measure}).\qed
\end{proof}


\begin{thebibliography}{}

 

 
 
\bibitem{D} Demailly J.-P., Singular Hermitian Metrics on Positive Line Bundles, Proceedings of the Bayreuth Conference on Complex Algebraic Varieties, April 2-6, 1990,
Lecture Notes in Math., 1507, Springer (1992).

\bibitem{D1} Demailly J.-P., Monge-Amp\`ere operators, Lelong numbers and intersection theory, Complex analysis and geometry, pp. 115--193, Univ. Ser. Math., Plenum, New York (1993). 

\bibitem{DE} Dabbek K., Elkhadhra F. Capacit\'e associ\'ee \`a un courant positif ferm\'e, Doc. Math., 1, pp. 469--486 (2006).
\bibitem{DF} Diller J., Favre C., Dynamics of bimeromorphic maps of surfaces. Amer. J. Math., 123, no. 6, pp. 1135--1169 (2001). 

\bibitem{DL} Dinh T.-C., Lawrence M.G., Polynomial hulls and positive currents, Ann. Fac. Sci. Toulouse Math., (6) 12, no. 3, pp. 317--334 (2003).


\bibitem{FS} Fornaess J., Sibony N., Complex Dynamics in Higher Dimension I, Asterisque, 222, pp. 201--231 (1994).

\bibitem{GH} Griffith P., Harris J., Principles of algebraic geometry. Reprint of the 1978 original. Wiley Classics Library, John Wiley and Sons, Inc., New York (1994).
\bibitem{G} Guedj V., Equidistribution towards the Green current, Bull. Soc. Math. France, 131, no. 3, pp. 359--372 (2003). 

\bibitem{Gs} Guedj V., Propri\'et\'es Ergodiques des Applications Rationnelles, Panoramas et synth\`eses, 30, pp. 97--202 (2010). 

\bibitem{GS2} Guedj V., Sibony N., Dynamics of Polynomial Automorphisms of $\mathbf{C}^k$, Ark. Mat., 40, pp. 207--243 (2002).
\bibitem{GZ} V. Guedj, A. Zeriahi, Intrinsic capacities on compact K\"ahler manifolds, J. Geom. Anal., 15, no. 4, pp. 607--639 (2005). 
\bibitem{M} Maegawa K., Quadratic polynomial automorphisms of dynamical degree golden ratio of $\mathbb{C}^3$, Ergodic Theory Dynam. Systems,  21,  no. 3, pp. 823--832  (2001).

\bibitem{P} Protin, F., Pluripotential theory on the support of closed positive currents and applications to dynamics in $\mathbb{C}^n$, Annali di Matematica (2019). https://doi.org/10.1007/s10231-019-00851-y


\end{thebibliography}
\end{document}